\numberwithin{equation}{section}
\long\def\remove#1{}
\newtheorem{theorem}{Theorem}[section] 
\newtheorem{lemma}[theorem]{Lemma}
\newtheorem{claim}[theorem]{Claim}
\newtheorem{definition}{Definition}[section]
\newcommand {\mm}[1] {\ifmmode{#1}\else{\mbox{\(#1\)}}\fi}
\newcommand{\denselist}{\itemsep 0pt\parsep=1pt\partopsep 0pt}
\newcommand{\myER}			{{Erd\H{o}s--R\'enyi}}
\newcommand{\myE}			{{\mathbb{E}}}
\newcommand{\myVar}[1]			{\mathrm{Var} \left[ #1 \right]}
\newcommand{\myi}			{{\bm{i}}}
\newcommand{\myj}			{{\bm{j}}}
\newcommand{\myk}			{{\bm{k}}}
\newcommand{\myprob}[1]				{\mathbb{P} \left[ #1 \right]}
\DeclarePairedDelimiter\floor{\lfloor}{\rfloor}
\begin{document}
\title[The $1$st Betti number of layer-$1$ subgraphs]{A limit theorem for the $1$st Betti number of layer-$1$ subgraphs in random graphs}
\author{Minghao Tian}
\address{Computer Science and Engineering Dept., The Ohio State University, USA}
\email{tian.394@osu.edu}

\author{Yusu Wang}
\address{Computer Science and Engineering Dept., The Ohio State University, USA}
\email{yusu@cse.ohio-state.edu}

\date{\today}

\begin{abstract}
We initiate the study of local topology of random graphs. The high level goal is to characterize local ``motifs'' in graphs. In this paper, we consider what we call the \emph{layer-$r$ subgraphs} for an input graph $G = (V,E)$: Specifically,  the \emph{layer-$r$ subgraph} at vertex $u \in V$, denoted by $G_{u; r}$, is the induced subgraph of $G$ over vertex set $\Delta_{u}^{r}:= \left\{v \in V: d_G(u,v) = r \right\}$, where $d_G$ is shortest-path distance in $G$. Viewing a graph as a 1-dimensional simplicial complex, we then aim to study the $1$st Betti number of such subgraphs. Our main result is that the $1$st Betti number of layer-$1$ subgraphs in Erd\H{o}s--R\'enyi random graphs $G(n,p)$ satisfies a central limit theorem.  
\end{abstract}

\maketitle

\section{Introduction}
\label{sec:intro}
The study of topological properties of random structures can be dated back to 1959, when Erd\H{o}s \cite{erdos1959graph} gave a probabilistic construction of a graph with large girth and large chromatic number. One variant of the construction is later called the \myER{} random graph $G(n,p)$, which is constructed by adding edges between all pairs of $n$ vertices with probability $p$ independently. Since then, many properties like the connectivity, largest components and the clique number in $G(n,p)$ have also been studied \cite{bollobas2001random}. Recently, due to the rapid development of topological data analysis (TDA) \cite{chazal2017introduction}, the homology of random simplicial complexes, which is a high-dimensional generalization of \myER{} random graphs, has received much attention \cite{kahle2013topology, kahle2014sharp, kahle2013limit}. 
In addition to the standard homological invariants, the newly developed persistent homology theory has also been applied to random simplicial complexes: For example, in \cite{bobrowski2014topology}, the authors consider the length of the longest barcode in persistence diagrams induced by some filtration consisting of random simplical complexes. 

In this article, we initiate the study of local topology of random graphs. We consider a type of local subgraphs called \emph{rooted $k$-neighborhood subgraphs}: for any graph $G = (V,E)$, the rooted $k$-neighborhood subgraph at vertex $u \in V$, denoted by $G_u^{k}$, is the induced subgraph of $G$ over vertex set $\Gamma_{u}^{k} =\left\{v \in V: d_G(u,v) \leq k \right\}$, where $d_G$ is the geodesic (shortest-path) distance. We take a first step to analyze the topology of rooted $k$-neighborhood subgraphs in \myER{} random graph $G(n,p)$. Specifically, we consider the ``layers'' of $G_u^{k}$ called \emph{layer-$r$ subgraphs}. 
\begin{definition}[Layer-$r$ subgraphs]
Given a graph $G = (V,E)$ with vertex set $V$ and edge set $E$, for any vertex $u \in V$, the \emph{layer-$r$ subgraph of $u$} is the induced subgraph over vertex set $\Delta_u^r = \left\{v \in V: d_G(u,v) = r \right\}$, where $d_G$ is the shortest-path metric. Denote such subgraph by $G_{u;r}$.
\end{definition}

A graph can be naturally viewed as a $1$-dimensional simplicial complex. Thus, for any graph $G = (V,E)$, we can define the \emph{$0$th Betti number $\beta_{0}(G)$} and \emph{$1$st Betti number $\beta_{1}(G)$}, which are the ranks of the $0$th and the $1$st homology group of the corresponding $1$-dimensional simplicial complex, respectively. Note that $\beta_{0}(G)$ is equal to the number of connected components in $G$ and $\beta_{1}(G) = |E| - |V| + \beta_{0}(G)$ by the Euler characteristic formula \cite{hatcher2002algebraic}. 

We are interest in the behavior of $\beta_{1}\left(G_{u;r}\right)$. In particular, we consider the following random variable $C_{n,p}^{(r)}$ defined by 
\begin{enumerate}
\item Sample a graph $G$ from $G(n,p)$;
\item Randomly pick a vertex $v$ in $G$; 
\item Set $C_{n,p}^{(r)} := \beta_{1}\left(G_{v;r}\right)$; Also set $c_{n,p}^{(r)} : = \beta_{0}\left(G_{v;r}\right)$.
\end{enumerate}

Throughout this paper, we use the standard Bachmann-Landau notation (asymptotic notation). That is, for real valued functions $f(n)$ and $g(n)$, as $n\rightarrow \infty$, we say
\begin{enumerate}
\item $f(n) = O(g(n))$: $\exists$ two constants $c > 0$ and $n_0 \in \mathbb{N}$ such that $|f(n)| \leq cg(n)$ for all $n \geq n_0$;
\item $f(n) = o(g(n))$: $\forall \epsilon > 0$, $\exists n_0 \in \mathbb{N}$ such that $|f(n)| < \epsilon g(n)$ for all $n \geq n_0$;
\end{enumerate}

We also use the notation $f \ll g$ to mean that $f(n)/g(n) \to 0$ as $n \to \infty$.

Our main result is that under some condition on $p$, $C_{n,p}^{(1)}$ satisfies a central limit theorem.
\begin{theorem}\label{thm:CLT_C}
If $n^{-1/2} \ll p < 1$, then
\begin{align*}
\frac{C_{n,p}^{(1)} - \myE[C_{n,p}^{(1)}]}{\sqrt{\myVar{C_{n,p}^{(1)}}}} \Rightarrow \mathcal{N}(0,1)
\end{align*} 
\end{theorem}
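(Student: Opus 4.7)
The plan is to reduce the theorem to the classical Binomial central limit theorem via a conditional-expectation decomposition. By vertex-transitivity of $G(n,p)$ we may fix the root to be vertex~$1$. Let $X_i := \mathbbm{1}\{\{1,i\}\in E\}$ for $2\le i\le n$ and $Y_{ij} := \mathbbm{1}\{\{i,j\}\in E\}$ for $2\le i<j\le n$; these $\binom{n}{2}$ indicators are i.i.d.\ Bernoulli$(p)$. The layer-$1$ subgraph $G_{1;1}$ has $D := \sum_i X_i$ vertices and $M := \sum_{i<j} X_i X_j Y_{ij}$ edges (equivalently, $M$ counts the triangles in $G$ through vertex~$1$), so the Euler characteristic formula gives $C_{n,p}^{(1)} = M - D + K$ with $K := \beta_0(G_{1;1})$.

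The algebraic key is that $\myE[M \mid X_2,\ldots,X_n] = p\binom{D}{2}$ depends on the star at vertex~$1$ only through $D$. Writing $M = p\binom{D}{2} + R$ with $\myE[R \mid X] = 0$, and setting $\phi(d) := p\binom{d}{2} - d$, I obtain the decomposition $C_{n,p}^{(1)} = \phi(D) + R + K$. Because $\phi$ is a quadratic polynomial, its Taylor expansion about $\mu := \myE[D] = (n-1)p$ is exact, and extracting the linear piece yields
\[
C_{n,p}^{(1)} - \myE[C_{n,p}^{(1)}] \;=\; \phi'(\mu)(D-\mu) + \tfrac{p}{2}\bigl[(D-\mu)^2 - \myVar{D}\bigr] + R + (K - \myE[K]),
\]
with $\phi'(\mu) \sim np^2$.

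Next I would show that in the regime $np^2 \to \infty$ each of the three error terms on the right has variance of strictly smaller order than $\myVar{\phi'(\mu)(D-\mu)} \sim n^3 p^5(1-p)$. Standard calculations give $\myVar{R} = p(1-p)\,\myE[\binom{D}{2}] \sim n^2 p^3$ (since $R$ is a centred conditional Binomial), and using $\myE[(D-\mu)^4]\sim 3\myVar{D}^2$ for the Binomial, the quadratic Taylor remainder has variance $\sim n^2 p^4$; the ratios $1/(np^2)$ and $1/(np)$ both tend to zero under $p \gg n^{-1/2}$. The identity $\myE[R\mid X]=0$ kills the covariance of $R$ with every function of $D$, and the remaining cross terms are bounded by Cauchy--Schwarz. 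Once these bounds are in place, $\myVar{C_{n,p}^{(1)}} = (1+o(1))\,\phi'(\mu)^2 \myVar{D}$, and Slutsky's theorem reduces the claim to the CLT for $(D-\mu)/\sqrt{\myVar{D}}$, which is the classical DeMoivre--Laplace theorem and holds whenever $np(1-p)\to\infty$.

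The main obstacle will be the estimate on $K$. Conditionally on $X$, the subgraph $G_{1;1}$ is an \myER{} graph $G(D,p)$ whose internal density $Dp \approx np^2$ tends to infinity. When $np^2 \gg \log(np)$ the graph is connected w.h.p., forcing $K=1$ almost surely; in the intermediate supercritical regime $K$ is dominated by isolated vertices, and a second-moment computation on $G(D,p)$ conditional on $D$, combined with concentration of $D$ around $\mu$, yields $\myE[K]+\myVar{K} \lesssim 1 + np\,e^{-np^2}$. Since $np\,e^{-np^2}/(n^3 p^5) = e^{-np^2}/(np^2)^2 \to 0$ for any $np^2 \to \infty$, the exponential decay beats the polynomial denominator and the bound closes, completing the reduction.
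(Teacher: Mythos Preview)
Your approach is correct and genuinely different from the paper's. The paper writes $X^{(1)}:=C^{(1)}-c^{(1)}=\gamma-\alpha$ as a sum of dissociated indicators $\eta_{(s,t)}$ over all ordered pairs (including diagonal pairs) in $V\setminus\{u\}$, applies Stein's method (Lemma~\ref{lem:normal_approx}) to obtain a total-variation CLT for the normalised $X^{(1)}$, and then transfers this to $C^{(1)}$ via the bound $d_{TV}(\widetilde{C^{(1)}},\widetilde{X^{(1)}})\le\myprob{c^{(1)}>1}=o(1)$. Your route instead conditions on the star $\{X_i\}$, extracts the dominant linear projection $\phi'(\mu)(D-\mu)$, and reduces everything to the DeMoivre--Laplace CLT for $D\sim\mathrm{Bin}(n-1,p)$ by Slutsky. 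Both methods identify the same leading variance $\sim n^3p^5(1-p)$. What the paper's route buys is an explicit rate $O((np)^{-1/2})$ in total variation, coming for free from Stein; your Slutsky step is non-quantitative as written, though Berry--Esseen for the Binomial plus Chebyshev on the remainders would recover a comparable rate with a little more work. What your route buys is that it is more elementary---no Stein machinery, no enumeration of dependency neighbourhoods---and it makes transparent \emph{why} the fluctuations are Gaussian: they are driven by the degree $D$, with the triangle count $M$ slaved to it through $p\binom{D}{2}$. Your handling of $K$ via a second-moment bound $\myVar{K}\lesssim np\,e^{-np^2}$ is also slightly different from (and slightly stronger than) the paper's, which only needs $\myprob{K>1}=o(1)$; your ratio $e^{-np^2}/(np^2)^2\to 0$ does close the argument, but the second-moment computation in the intermediate regime $np^2\to\infty$, $np^2=O(\log np)$ deserves to be spelled out, since $\myE[K]$ itself need not be bounded there.
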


\vspace*{0.2in}

\paragraph{\bf Relation to persistent homology} For a given rooted $k$-neighborhood subgraph $G^k_u$, it can also be viewed as a metric graph equipped with the distance-to-root function $f_{G^k_u}$ \cite{dey2015comparing}: for any vertex $v$ in $G^k_u$, $f_{G^k_u}(v) = d_{G^k_u}(v,u)$; then we do linear interpolation for each edge. For example, suppose $z$ is the mid point of edge $(v,w)$, then $f_{G^k_u}(z) = \left[f_{G^k_u}(v) + f_{G^k_u}(w)\right] / 2$. 

Now consider the $1$-dimensional extended persistence diagram \cite{bendich2013homology} of $G^k_u$ induced by the super-level set filtration of the distance-to-root function. A special type of points in the diagram is the points on the diagonal. By an argument on extracting the \emph{Betti numbers} of some substructures from the extended persistence diagrams (Theorem 2 in \cite{bendich2013homology}), it is easy to see that the number of $(N,N)$ with any $N \in \mathbb{N}$ in the diagram associated with vertex $u$ is $\beta_1\left(G_{u;N}\right)$. Thus, Theorem \ref{thm:CLT_C} shows that the multiplicity of $(1,1)$ points in the $1$-dimensional extended persistence diagram satisfies a central limit theorem. See Appendix \ref{appendix:extended_persistence_diagram} for more details.

In the remainder of this paper, in Section \ref{sec:setting}, we will introduce background and some useful results that we will use later. We then prove the main theorem Theorem \ref{thm:CLT_C} in Section  \ref{sec:proof}.

\section{Definitions and useful lemmas}
\label{sec:setting}

\paragraph{\bf Weakly convergence and total variation distance} A sequence $\{X_n \}_{n=1}^{\infty}$ of random variables is said to \emph{converge weakly} to a limiting random variable $X$ (written $X_n \Rightarrow X$) if $\lim_{n \rightarrow \infty} \myE[f(X_n)] = \myE[f(X)]$ for all bounded continuous function $f$. The \emph{total variation distance} between real-valued random variables $X$ and $Y$ is defined by
\begin{align*}
d_{TV}(X, Y) := \sup_{A \in \mathcal{B}}\left| \myprob{X \in A} - \myprob{Y \in A} \right|
\end{align*}
where $\mathcal{B}$ is the class of Borel sets in $\mathbb{R}$. It also has the following equivalent form \cite{levin2017markov}:
\begin{align*}
d_{TV}(X, Y)= \frac{1}{2}\sup_f \left|\myE[f(X)] - \myE[f(Y)]\right|,
\end{align*}
with the supremum taken over all functions bounded by 1. Obviously, if $d_{TV}(X_n, X) \rightarrow 0$ as $n \rightarrow \infty$, then $X_n \Rightarrow X$. 

\paragraph{\bf Dissociated random variables} We say a set of random variables $\{\xi_{\bm{i}}: \bm{i} = (i_1, \cdots, i_r)\}_{\bm{i} \in I}$ for $I$ a set of unordered $r$-tuples is a set of \emph{dissociated random variables} if two subcollections of the random variables $\{\xi_{\bm{j}}\}_{\bm{j} \in J}$ and $\{\xi_{\bm{l}}\}_{\bm{l} \in L}$ are independent whenever $$\left(\bigcup_{\bm{j} \in J} \bm{j} \right) \bigcap \left(\bigcup_{\bm{l} \in L} \bm{l} \right)= \emptyset.$$ See \cite{silverman1976limit, moginley1975dissociated} for more details on dissociated random variables and their applications in random structures. Here, our main result heavily depends on the following normal approximation lemma on the sum of dissociated random variables.
\begin{lemma}[Stein's method, normal approximation \cite{barbour1989central}]\label{lem:normal_approx}
Suppose $\{\xi_{\bm{i}}\}_{\bm{i} \in I}$ is a set of dissociated random variables with $\myE[\xi_{\bm{i}}] = 0$ for each $i$. Set $W:=\sum_{\myi \in I}\xi_{\myi}$, and suppose $\myE[W^2] = 1$. For each $\myi \in I$, let $N_{\myi}:= \{\myj \in I: \myi \bigcap \myj \neq \emptyset \}$ be the dependency neighborhood for $\myi$. Let $Z = \mathcal{N}(0,1)$ be a standard normal random variable. Then, there is a universal constant $C$ such that 
\begin{align*}
d_{TV}(W, Z) \leq C \sum\limits_{\myi \in I}\sum\limits_{\myj, \myk \in N_{\myi}} \left\{\myE\left[ \left|\xi_{\myi} \xi_{\myj} \xi_{\myk}\right| \right] + \myE\left[\left|\xi_{\myi} \xi_{\myj} \right| \right] \myE\left[\left|\xi_{\myk} \right| \right] \right\}.
\end{align*}
\end{lemma}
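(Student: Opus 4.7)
The plan is to apply Stein's method via the characterizing equation for the standard normal: $Z \sim \mathcal{N}(0,1)$ if and only if $\myE[f'(Z) - Zf(Z)] = 0$ for every sufficiently regular $f$. For each test function $h$ bounded by $1$, I consider the Stein equation
\begin{equation*}
f_h'(w) - w f_h(w) = h(w) - \myE[h(Z)],
\end{equation*}
whose solution admits the well-known bounds on $\|f_h\|_\infty$ and $\|f_h'\|_\infty$, and (after the smoothing step discussed below) on $\|f_h''\|_\infty$. Since $d_{TV}(W,Z) = \sup_{A \in \mathcal{B}} |\myE[\mathbbm{1}_A(W)] - \myE[\mathbbm{1}_A(Z)]|$, it is enough to control $|\myE[f_h'(W) - W f_h(W)]|$ uniformly over Borel sets $A$ with $h = \mathbbm{1}_A$.

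The algebra is driven by the dissociation structure. For each $\myi \in I$, set $Y_\myi := \sum_{\myj \in N_\myi} \xi_\myj$ and $W_\myi := W - Y_\myi$. Because $N_\myi$ contains every tuple that meets $\myi$, the components of $\myi$ are disjoint from the components of $\bigcup_{\myj \notin N_\myi} \myj$; dissociation then yields $\xi_\myi \perp W_\myi$, whence $\myE[\xi_\myi g(W_\myi)] = 0$ for any bounded measurable $g$. Writing
\begin{equation*}
\myE[W f_h(W)] = \sum_{\myi \in I} \myE[\xi_\myi (f_h(W) - f_h(W_\myi))]
\end{equation*}
and Taylor-expanding $f_h(W) - f_h(W_\myi) = Y_\myi f_h'(W_\myi) + R_\myi$ with integral remainder $R_\myi$ isolates a leading piece $\sum_\myi \myE[\xi_\myi Y_\myi f_h'(W_\myi)]$. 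To match it with $\myE[f_h'(W)]$, I use the normalization $\sum_\myi \myE[\xi_\myi Y_\myi] = \myE[W^2] = 1$ (again using $\xi_\myi \perp W_\myi$) and the first-order expansion $f_h'(W) = f_h'(W_\myi) + \int_0^{Y_\myi} f_h''(W_\myi + t)\,dt$. Subtracting the two expressions, the $f_h'(W_\myi)$ terms cancel, and after bounding $\|f_h''\|_\infty$ by an absolute constant and pulling absolute values inside, what remains collapses into exactly
\begin{equation*}
\sum_{\myi} \sum_{\myj,\myk \in N_\myi} \myE\bigl[|\xi_\myi \xi_\myj \xi_\myk|\bigr] \;+\; \sum_{\myi} \sum_{\myj,\myk \in N_\myi} \myE\bigl[|\xi_\myi \xi_\myj|\bigr]\,\myE\bigl[|\xi_\myk|\bigr],
\end{equation*}
as stated.

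The main technical obstacle is the low regularity of $f_h$ when $h$ is the indicator of a Borel set: its second derivative exists only as a signed measure, so a direct second-order Taylor bound is unavailable. I would handle this in the Barbour--Karo\'nski--Ruci\'nski manner: replace $h$ by a mollified version $h_\sigma$ (for instance, convolve with a centered Gaussian of scale $\sigma$), which produces $\|f_{h_\sigma}''\|_\infty = O(1/\sigma)$ in the Stein bound, and pay a smoothing error of the form $\sup_t \myprob{|W - t| < \sigma}$. Optimizing $\sigma$ against a mild concentration estimate for $W$ (itself obtainable from a Chebyshev-plus-dissociation argument) and then letting the concentration scale balance the Stein scale yields the stated inequality with a universal constant~$C$.
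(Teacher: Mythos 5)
First, a point of reference: the paper gives no proof of this lemma at all --- it is quoted verbatim from the cited reference --- so your attempt can only be measured against the argument of Barbour--Karo\'nski--Ruci\'nski. Your core machinery is indeed theirs: setting $Y_{\myi} = \sum_{\myj \in N_{\myi}} \xi_{\myj}$ and $W_{\myi} = W - Y_{\myi}$, using dissociation to get independence of $\xi_{\myi}$ and $W_{\myi}$, writing $\myE[Wf(W)] = \sum_{\myi} \myE\left[\xi_{\myi}\left(f(W) - f(W_{\myi})\right)\right]$, Taylor expanding, and invoking $\sum_{\myi}\myE[\xi_{\myi} Y_{\myi}] = \myE[W^2] = 1$. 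Up to the point where the regularity of the test function matters, this outline is sound and produces exactly the stated error terms.

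The genuine gap is the final mollification-and-optimization step, and it is not repairable, because the inequality as stated --- with $d_{TV}$ the supremum over all Borel sets --- is false. Take $I$ to be the singletons $\{1\},\dots,\{n\}$ and $\xi_{\{i\}} := X_i/\sqrt{n}$ with $X_1,\dots,X_n$ i.i.d.\ signs ($\pm 1$ with probability $1/2$). This family is dissociated, $\myE[W^2]=1$, and $N_{\{i\}} = \{\{i\}\}$, so the right-hand side of the lemma equals $2C n^{-1/2} \rightarrow 0$; yet $d_{TV}(W, Z) = 1$ for every $n$, since $W$ takes finitely many values while $Z$ has a density. Concretely, your proof breaks where you propose to ``optimize $\sigma$ against a mild concentration estimate'': the smoothing error $\sup_t \myprob{|W - t| < \sigma}$ is bounded below by the mass of the largest atom of $W$, hence does not tend to $0$ as $\sigma \rightarrow 0$, and no Chebyshev-type bound can help, since what is needed there is anti-concentration, not concentration. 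What Barbour--Karo\'nski--Ruci\'nski actually prove is the same bound for a metric defined by smooth (bounded Lipschitz) test functions, for which your Taylor expansion closes directly via $\|f_h''\|_{\infty} \leq 2\|h'\|_{\infty}$ with no smoothing at all. That weaker metric still metrizes weak convergence, so Theorem~\ref{thm:CLT_C} survives; but note this misstatement infects the paper itself: the normalized variables in Lemma~\ref{lem:CLT_C-c} and in the proof of Theorem~\ref{thm:CLT_C} are discrete, hence at total variation distance exactly $1$ from $\mathcal{N}(0,1)$, and those statements (including the coupling step bounding $d_{TV}$ by $\myprob{c^{(1)} > 1}$, which works just as well for the bounded Lipschitz metric) should likewise be restated in the weaker metric.
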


The following well-known concentration inequality is also used in our proof.
\begin{lemma}[Chernoff bound \cite{dubhashi2009concentration}]\label{lem:chernoff}
Let $X_1, \cdots, X_n$ be independent random variables with values in $[0,1]$ and $X := \sum_{i}X_i$. Then, for any $\epsilon > 0$, we have
\begin{align*}
\myprob{X > (1 + \epsilon)\myE\left[X\right]} < \exp\left( -\epsilon^2 \myE\left[X \right] / 3 \right),\\
\myprob{X < (1 - \epsilon)\myE\left[X\right]} < \exp\left( -\epsilon^2 \myE\left[X \right] / 2 \right).
\end{align*}
\end{lemma}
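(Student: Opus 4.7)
The plan is to prove both tails by the classical Chernoff moment-generating-function (MGF) method: apply Markov's inequality to an exponential transform of $X$, use independence to factorize, bound each univariate MGF using the assumption $X_i\in[0,1]$, and finally optimize the free parameter. Write $\mu:=\myE[X]=\sum_i \myE[X_i]$ throughout. The only step that really needs care beyond bookkeeping is converting the resulting sharp (but implicit) exponent into the clean Gaussian-type form $e^{-\epsilon^2\mu/3}$ and $e^{-\epsilon^2\mu/2}$ stated in the lemma.

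For the upper tail I would fix any $t>0$ and write
\begin{align*}
\myprob{X>(1+\epsilon)\mu}\;\leq\; e^{-t(1+\epsilon)\mu}\,\myE[e^{tX}]\;=\;e^{-t(1+\epsilon)\mu}\prod_{i=1}^{n}\myE[e^{tX_i}].
\end{align*}
Because $x\mapsto e^{tx}$ is convex on $[0,1]$, the chord inequality gives $e^{tX_i}\leq 1+X_i(e^t-1)$, so $\myE[e^{tX_i}]\leq 1+\myE[X_i](e^t-1)\leq\exp(\myE[X_i](e^t-1))$ by $1+x\leq e^{x}$. Taking the product produces $\myE[e^{tX}]\leq\exp(\mu(e^t-1))$, and choosing the optimizer $t=\ln(1+\epsilon)$ yields
\begin{align*}
\myprob{X>(1+\epsilon)\mu}\;\leq\;\exp\bigl(\mu\,[\,\epsilon-(1+\epsilon)\ln(1+\epsilon)\,]\bigr).
\end{align*}
At this point the theorem reduces to the scalar inequality $(1+\epsilon)\ln(1+\epsilon)-\epsilon\geq \epsilon^2/3$.

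The lower tail is handled symmetrically: I would take $t=-s$ with $s>0$, work with $e^{-sX}$, and repeat the convexity step to get $\myE[e^{-sX}]\leq\exp(\mu(e^{-s}-1))$. The optimizer is $s=-\ln(1-\epsilon)$ (valid since $\epsilon$ can be assumed to lie in $(0,1)$, as otherwise the event is empty), giving
\begin{align*}
\myprob{X<(1-\epsilon)\mu}\;\leq\;\exp\bigl(-\mu\,[\,\epsilon+(1-\epsilon)\ln(1-\epsilon)\,]\bigr).
\end{align*}
The required scalar inequality is now $(1-\epsilon)\ln(1-\epsilon)+\epsilon\geq\epsilon^2/2$, and the tighter constant $1/2$ (vs.\ $1/3$ in the upper tail) arises because the Taylor expansion $(1-\epsilon)\ln(1-\epsilon)+\epsilon=\tfrac{\epsilon^2}{2}+\tfrac{\epsilon^3}{6}+\cdots$ has all nonnegative coefficients, whereas the upper-tail analogue $(1+\epsilon)\ln(1+\epsilon)-\epsilon=\tfrac{\epsilon^2}{2}-\tfrac{\epsilon^3}{6}+\cdots$ has a negative cubic term that must be absorbed.

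The main obstacle is therefore not the probabilistic reasoning, which is routine, but the two one-variable calculus inequalities that convert the sharp exponents into the stated Gaussian-type form. Each can be verified in a single way: define $g(\epsilon)$ to be the difference (for instance $g(\epsilon):=(1+\epsilon)\ln(1+\epsilon)-\epsilon-\epsilon^2/3$ for the upper tail), observe that $g(0)=g'(0)=0$, and then check the sign of $g''$ on the relevant interval. For the upper tail one restricts to $\epsilon\in(0,1]$ where the $\epsilon^2/3$ form is standardly stated; for $\epsilon>1$ the inequality remains valid and in fact strengthens, so no additional case analysis is needed. Once these two scalar bounds are in hand, substitution into the two displayed exponential bounds above completes the proof.
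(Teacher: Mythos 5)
The paper never proves this lemma---it is quoted verbatim (with citation) from the Dubhashi--Panconesi textbook---so your MGF argument is not competing against any in-paper proof. The probabilistic skeleton of your attempt is the standard one and is correct: Markov's inequality applied to $e^{tX}$, factorization by independence, the chord bound $e^{tx}\le 1+x(e^t-1)$ for $x\in[0,1]$, and optimization at $t=\ln(1+\epsilon)$ (resp.\ $s=-\ln(1-\epsilon)$) do yield the exponents $\mu\left[\epsilon-(1+\epsilon)\ln(1+\epsilon)\right]$ and $-\mu\left[\epsilon+(1-\epsilon)\ln(1-\epsilon)\right]$, and your power-series verification of $(1-\epsilon)\ln(1-\epsilon)+\epsilon\ge\epsilon^2/2$ on $(0,1)$ is sound (for $\epsilon\ge 1$ the lower-tail event is indeed empty, since $X\ge 0$).

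The genuine gap is in the upper tail, at exactly the step you dismiss as needing no case analysis. The scalar inequality $(1+\epsilon)\ln(1+\epsilon)-\epsilon\ge\epsilon^2/3$ does \emph{not} ``remain valid and in fact strengthen'' for $\epsilon>1$: it already fails at $\epsilon=2$, where $(1+\epsilon)\ln(1+\epsilon)-\epsilon=3\ln 3-2\approx 1.296<4/3$, and it fails badly as $\epsilon\to\infty$, since the left side grows like $\epsilon\ln\epsilon$ while the right side grows like $\epsilon^2$. No repair is possible, because the lemma's upper-tail conclusion as stated ``for any $\epsilon>0$'' is itself false: for sums of Bernoulli variables with $\myE[X]=1$ the true tail $\myprob{X>1+\epsilon}$ decays only like $e^{-\Theta(\epsilon\ln\epsilon)}$, which exceeds $e^{-\epsilon^2/3}$ for large $\epsilon$. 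The cited textbook states the $\epsilon^2/3$ bound only for $0<\epsilon<1$, and the honest fix is to restrict the upper tail to $0<\epsilon\le 1$ --- which is all this paper ever uses, as Lemma \ref{lem:estimate_c1} invokes the bound only with $\epsilon=1$ and $\epsilon=1/2$. A second, smaller defect: even on $(0,1]$ your recipe ``check the sign of $g''$'' does not go through as described, because $g''(\epsilon)=\frac{1}{1+\epsilon}-\frac{2}{3}$ changes sign at $\epsilon=1/2$; instead one can check that $g'(\epsilon)=\ln(1+\epsilon)-2\epsilon/3$ is positive on $(0,1]$ (it increases up to $\epsilon=1/2$ and then decreases to $g'(1)=\ln 2-2/3>0$), whence $g>0$ there.
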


In what follows, we often omit the parameters $n$ and $p$ from the notation $C_{n,p}^{(k)}$ and $c_{n,p}^{(k)}$ when their choices are clear from the context.

\section{Proof of Theorem \ref{thm:CLT_C}}
\label{sec:proof}

Recall that $C^{(1)}$ is the $1$st Betti number of a random layer-$1$ subgraph and $c^{(1)}$ is the $0$th Betti number (number of connected components) of the subgraph. First, we show that $C^{(1)} - c^{(1)}$ satisfies a central limit theorem for $n^{-1/2} \ll p < 1$. 

\begin{lemma}\label{lem:CLT_C-c}
Set $X^{(1)} := C^{(1)} - c^{(1)}$. Let $Z = \mathcal{N}(0,1)$ be a standard normal random variable. If $n^{-1/2} \ll p < 1$, then
\begin{align*}
d_{TV} \left(\frac{X^{(1)} - \myE[X^{(1)}]}{\sqrt{\myVar{X^{(1)}}}}, Z\right) = o(1)
\end{align*} 
\end{lemma}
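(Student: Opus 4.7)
The plan is to use the Euler characteristic formula to rewrite $X^{(1)}$ as a sum of bounded dissociated random variables indexed by pairs of non-$v$ vertices, and then invoke Lemma~\ref{lem:normal_approx}. Because the $1$-dimensional complex $G_{v;1}$ satisfies $\beta_1-\beta_0=|E|-|V|$, we have $X^{(1)}=T_v-D_v$, where $D_v:=\deg_G(v)$ and $T_v$ is the number of triangles through $v$ in $G$; by the vertex exchangeability of $G(n,p)$ we may fix $v$. Writing $X_{ab}$ for edge indicators and setting for each unordered pair $\myi=\{i,j\}\subset V\setminus\{v\}$
\[
Y_\myi := X_{vi}X_{vj}X_{ij} - \tfrac{1}{n-2}\bigl(X_{vi}+X_{vj}\bigr), \qquad \eta_\myi := Y_\myi - \myE[Y_\myi],
\]
the fact that each vertex $i\neq v$ lies in exactly $n-2$ such pairs gives $\sum_\myi Y_\myi=T_v-D_v$, and hence $\sum_\myi \eta_\myi=X^{(1)}-\myE[X^{(1)}]$. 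Since $\eta_\myi$ is a bounded function of the three edge indicators $X_{vi},X_{vj},X_{ij}$ alone, $\{\eta_\myi\}$ is dissociated, with dependency neighborhood $N_\myi=\{\myj:\myj\cap\myi\neq\emptyset\}$ of size $2n-5$.

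To estimate $\sigma^2:=\myVar{X^{(1)}}$ in the regime $p\gg n^{-1/2}$, direct covariance counting gives $\myVar{T_v}=\Theta(n^3 p^5)$ (dominated by pairs of triangles through $v$ sharing exactly one non-$v$ vertex, contributing $p^5(1-p)$ each over $\Theta(n^3)$ such pairs), $\myVar{D_v}=\Theta(np)$, and $\mathrm{Cov}(T_v,D_v)=\Theta(n^2 p^3)$; the triangle variance dominates precisely because $np^2\to\infty$, so $\sigma^2=\Theta(n^3 p^5)$. Applying Lemma~\ref{lem:normal_approx} to $\tilde\eta_\myi:=\eta_\myi/\sigma$ I need to bound
\[
d_{TV}\!\left(\tfrac{X^{(1)}-\myE[X^{(1)}]}{\sigma},\,Z\right) \leq \tfrac{C}{\sigma^3}\sum_\myi\sum_{\myj,\myk\in N_\myi}\!\bigl\{\myE[|\eta_\myi\eta_\myj\eta_\myk|]+\myE[|\eta_\myi\eta_\myj|]\myE[|\eta_\myk|]\bigr\}.
\]
Writing $\eta_\myi$ as a centered triangle indicator plus an $O(1/n)$ centered degree correction and splitting the sum by how many of $\myi,\myj,\myk$ coincide, each product of triangle indicators has expectation $O(p^{\#\text{edges}})$, where $\#\text{edges}$ counts the distinct edges of $G$ appearing in the product. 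The dominant triple-moment sum is of order $O(n^4 p^7+n^3 p^5+n^2 p^3)$, coming respectively from three-distinct pairs (seven edges, $O(n^4)$ triples), two-equal pairs (five edges, $O(n^3)$ triples), and all-equal pairs (three edges, $O(n^2)$ triples). Dividing by $\sigma^3=\Theta(n^{9/2}p^{15/2})$ yields contributions of order $O\bigl((np)^{-1/2}\bigr)$, $O\bigl((n^{3/2}p^{5/2})^{-1}\bigr)$, and $O\bigl((n^{5/2}p^{9/2})^{-1}\bigr)$, all $o(1)$ under $p\gg n^{-1/2}$ (so in particular $np\to\infty$). The sum involving $\myE[|\eta_\myi\eta_\myj|]\myE[|\eta_\myk|]$ is smaller by a factor of $p$ and is controlled identically.

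The main obstacle is securing the lower bound $\sigma^2=\Theta(n^3 p^5)$. The cancellation from $-2\,\mathrm{Cov}(T_v,D_v)=-\Theta(n^2p^3)$ is strictly of lower order than $\myVar{T_v}=\Theta(n^3p^5)$ exactly when $np^2\to\infty$, which is equivalent to the hypothesis $p\gg n^{-1/2}$. If this lower bound on $\sigma^2$ were to fail, the critical $(np)^{-1/2}$ estimate on the Stein error would collapse; once it is in place, the remaining case analysis on distinct-edge counts inside the products of triangle indicators is routine bookkeeping.
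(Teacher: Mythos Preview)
Your proposal is correct and follows essentially the same route as the paper: both rewrite $X^{(1)}=\gamma-\alpha$ (triangles through $v$ minus $\deg v$), express it as a sum of dissociated variables indexed by pairs of non-$v$ vertices, apply Lemma~\ref{lem:normal_approx}, show $\sigma^2=\Theta(n^3p^5)$ and that the triple-moment sum is $O(n^4p^7)$, concluding with the $(np)^{-1/2}$ bound. The only cosmetic difference is the indexing of the degree contribution: the paper includes diagonal pairs $(v_i,v_i)$ in the index set so that every summand is a $0$/$1$ indicator (making the moment bound a one-line inequality), whereas you spread the degree over off-diagonal pairs with weight $1/(n-2)$, which forces you to track the $O(1/n)$ correction terms in the moment estimates---routine, but the paper's choice is a little cleaner.
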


\begin{proof}
It is easy to see that the number of neighbors of an individual vertex of $G(n,p)$ is a Binomial random variable with parameters $(n-1)$ and $p$. Denote the sampled graph by $G = (V,E)$. We pick an arbitrary vertex $u \in V$ and consider the layer-$1$ subgraph $G_{u;1}$ at $u$. Let $\alpha$ be the number of vertices in $G_{u;1}$ and $\gamma$ be the number of edges in $G_{u;1}$. Then, we know that $X^{(1)} = C^{(1)} - c^{(1)} = \gamma - \alpha$. Let $V' = V\setminus \{u\}$ be the set of vertices excluding $u$. Without loss of generality, we assume the vertex set $V' = \{v_1,v_2,\cdots, v_{n-1}\}$. For any $(s,t) \in I := \left\{(v_i, v_j): 1 \leq i \leq j \leq n-1 \right\}$, the corresponding indicator random variable $\eta_{(s,t)}$ is defined as follows:
\begin{align*}
\eta_{(s,t)} = \left\{
\begin{array}{cl}
1 , & \text{ $\{s\} \cup \{t\} \cup \{u\}$ spans a complete graph;}\\
0, & \text{ Otherwise}\\
\end{array}
\right.
\end{align*}

It is easy to see that the following holds.
\begin{align*}
X^{(1)} = \gamma - \alpha = \sum\limits_{(s,t) \in I}(-1)^{\mathbbm{1}_{\{s = t\}}}\cdot \eta_{(s,t)}
\end{align*} 
where $\mathbbm{1}_{\{s = t\}}$ is the indicator function.

Set $\sigma^2 := \myVar{X^{(1)}}$. Define a finite collection of random variables $\left\{\xi_{(s,t)}\right\}_{(s,t) \in I}$ with
\begin{align*}
\xi_{(s,t)} := \frac{(-1)^{\mathbbm{1}_{\{s = t\}}}}{\sigma}\left(\eta_{(s,t)} - \myE\left[ \eta_{(s,t)} \right]\right)
\end{align*}
as the ``normalized'' version of $\left\{\eta_{(s,t)}\right\}$: it is easy to check $\myE\left[\xi_{(s,t)}\right] = 0$. Now set 
\begin{align}
W := \sum_{(s,t) \in I} \xi_{(s,t)} = \frac{X^{(1)} - \myE\left[X^{(1)} \right]}{\sqrt{\myVar{X^{(1)}}}}, \label{eqn:Wdef}
\end{align}
where the last equality holds due to the linearity of expectation. Thus we have $\myE\left[W^2\right] = 1$. Decompose the collection by $I = I_v \sqcup I_e$ where $I_v : = \left\{(v_i, v_i): v_i \in V' \right\}$ and $I_e := \left\{(v_i, v_j) \in V' \times V': v_i \neq v_j \right\}$. It is not hard to check that $\left\{\xi_{(s,t)} \right\}_{(s,t) \in I}$ is a collection of dissociated random variables. 

Note that for any $2$-tuples $A, B\in I$, we have 
\begin{align*}
\myE\left[\eta_A \right] \myE\left[\eta_B \right] &= \myprob{\eta_A = 1} \myprob{\eta_B = 1} \\
&\leq \myprob{\eta_A = 1 \middle| \eta_B = 1} \myprob{\eta_B = 1} = \myprob{\eta_A \eta_B = 1} = \myE\left[\eta_A \eta_B\right].
\end{align*}
Thus, for any $2$-tuples $A, B, C \in I$, by expanding the expectation (16 terms in all), it is not hard to see
\begin{align*}
\myE\left[ \left|\xi_{A} \xi_{B} \xi_{C}\right| \right] + \myE\left[\left|\xi_{A} \xi_{B} \right| \right] \myE\left[\left|\xi_{C} \right| \right] \leq \frac{16}{\sigma^3} \myE\left[\eta_A \eta_B \eta_C \right]
\end{align*}
To prove Lemma \ref{lem:CLT_C-c}, by Lemma \ref{lem:normal_approx} and combining Eqn. \ref{eqn:Wdef}, it suffices to prove
\begin{align*}
\frac{1}{\sigma^3} \sum\limits_{A \in I} \sum\limits_{B,C \in N_{A}}\myE\left[\eta_A \eta_B \eta_C \right] \rightarrow 0 \text{ as } n \rightarrow \infty
\end{align*}
where $N_A$ is the subcollection of $2$-tuples of $I$ sharing one (vertex) element with $A$. We can further decompose the summation as follows.
\begin{align}
\sum\limits_{A \in I} \sum\limits_{B,C \in N_{A}}\myE\left[\eta_A \eta_B \eta_C \right] &= \sum\limits_{A \in I_v} \sum\limits_{B,C \in N_{A}}\myE\left[\eta_A \eta_B \eta_C \right] + \sum\limits_{A \in I_e} \sum\limits_{B,C \in N_{A}}\myE\left[\eta_A \eta_B \eta_C \right]\nonumber
\end{align}
Note that by enumerating all the $2$-tuples ($(n-2)$ tuples) in dependency neighborhoods of $I_v$, we have
\begin{align}\label{eqn:3terms_exp_first_part}
\sum\limits_{A \in I_v} \sum\limits_{B,C \in N_{A}}\myE\left[\eta_A \eta_B \eta_C \right] &= \sum\limits_{A \in I_v} \sum\limits_{B,C \in N_{A}}\myprob{\eta_B \eta_C = 1 \middle| \eta_A = 1} \myprob{\eta_A = 1} \nonumber\\
&= \sum\limits_{A \in I_v}  \myprob{\eta_A = 1} \sum\limits_{B,C \in N_{A}}\myprob{\eta_B \eta_C = 1 \middle| \eta_A = 1}\nonumber\\
&= \binom{n-1}{1} \myprob{\eta_{\left(v_1,v_1\right)} = 1} \sum\limits_{B,C \in N_{\left(v_1,v_1\right)}}\myprob{\eta_B \eta_C = 1 \middle| \eta_{\left(v_1,v_1\right)} = 1}\nonumber\\
& = (n-1)p \left[ (n-2)p^2 + (n-2)(n-1)p^4 \right]
\end{align} 
Similarly, we have
\begin{align}\label{eqn:3terms_exp_second_part}
&\sum\limits_{A \in I_e} \sum\limits_{B,C \in N_{A}}\myE\left[\eta_A \eta_B \eta_C \right]\nonumber\\
&= \binom{n-1}{2} \sum\limits_{B,C \in N_{\left(v_1,v_2\right)}}\myE\left[\eta_{\left(v_1,v_2\right)} \eta_B \eta_C \right]\nonumber\\
&= \binom{n-1}{2} \left\{\left(\sum\limits_{\substack{ B,C \in N_{\left(v_1,v_2\right)}\\ B, C \in I_v}} + \sum\limits_{\substack{ B,C \in N_{\left(v_1,v_2\right)}\\ B, C \in I_e}} + 2\sum\limits_{\substack{ B,C \in N_{\left(v_1,v_2\right)}\\ B \in I_v, C \in I_e}}\right) \myE\left[\eta_{\left(v_1,v_2\right)} \eta_B \eta_C \right]\right\}\nonumber\\
&= \binom{n-1}{2} \left\{4p^3 + 2(n-3)\left[p^5 + p^6 + 2(n-4)p^7 \right] + 2 \left[4(n-3)p^5 \right]\right\}
\end{align} 
Combining Eqn. \ref{eqn:3terms_exp_first_part} and Eqn. \ref{eqn:3terms_exp_second_part}, we know the following inequality holds. 
\begin{align}\label{eqn:3terms_exp_final}
\sum\limits_{A \in I} \sum\limits_{B,C \in N_{A}}\myE\left[\eta_A \eta_B \eta_C \right] < 3n^2p^3 + 6n^3p^5 + n^3p^6 + 2n^4p^7
\end{align}
Since $p \gg n^{-1/2}$, the leading order of the right hand side of Eqn. \ref{eqn:3terms_exp_final} is $n^4p^7$.

In what follows, we calculate $\sigma$ the standard deviation of $X^{(1)}$. First, note that 
\begin{align}\label{eqn:sigma_main}
\sigma^2 = \myVar{X^{(1)}} = \myVar{\gamma - \alpha} = \myVar{\alpha} + \myVar{\gamma} - 2 \cdot \mathrm{Cov}\left(\alpha, \gamma \right).
\end{align}
Also note that $\alpha = \sum_{A \in I_v} \eta_A$ and $\gamma = \sum_{A \in I_e} \eta_A$. By the linearity of expectation, we have $\myE\left[\alpha\right] = (n-1)p$ and $\myE\left[\gamma\right] = \binom{n-1}{2}p^3$. Note that $\alpha$ is the sum of $(n-1)$ independent random variable, thus $\myVar{\alpha} = (n-1)p(1-p)$. Then, by enumerating $A, B \in I_e$ by the size $i$ of their intersection, we know that
\begin{align}\label{eqn:var_beta}
\myVar{\gamma} &= \left\{\sum\limits_{A,B \in I_e} \myE\left[\eta_A \cdot \eta_B \right] \right\}- \left\{\myE\left[ \gamma \right] \right\}^2 \nonumber\\
&= \binom{n-1}{2} p^3 \left[\binom{n-3}{2}p^3 + \binom{2}{1}\binom{n-3}{1} p^2 + 1 \right] - \left[ \binom{n-1}{2} p^3\right]^2
\end{align}

Similarly, we can expand the covariance as follows.
\begin{align}\label{eqn:cov_alpha_beta}
\mathrm{Cov}\left(\alpha, \gamma \right) &= \myE\left[\sum\limits_{A \in I_v} \eta_A \cdot \sum\limits_{B \in I_e} \eta_B   \right] - \myE\left[ \alpha\right] \cdot \myE\left[\gamma \right] \nonumber\\
&= \binom{n-1}{1} p \left[\binom{n-2}{2}p^3 + \binom{n-2}{1}p^2 \right] - (n-1)p \cdot \binom{n-1}{2}p^3\nonumber\\
&= (n-1)(n-2)p^3(1-p) 
\end{align}

Finally, plugging Eqn. \ref{eqn:var_beta} and Eqn. \ref{eqn:cov_alpha_beta} back into Eqn. \ref{eqn:sigma_main} and by routine calculations, we have
\begin{align}\label{eqn:sigma_final}
\sigma^2 = (n-1) \bigg[p - &p^2 -(3/2)(n-2)p^3 + 2(n-2)p^4 \nonumber\\
 &+ (n-2)(n-3)p^5 - (1/2)(n-2)(2n-5)p^6\bigg]
\end{align}
Note that if $p < 1$, then by Eqn. \ref{eqn:sigma_final}, we know that $\sigma^2 \neq 0$. Also it is not hard to see that $p \gg n^{-1/2}$ implies that the leading order of $\sigma^2$ is $n^3p^5$. Finally, combining this fact with Eqn. \ref{eqn:3terms_exp_final}, we know that there exists some constant $C$ such that
\begin{align*}
\frac{1}{\sigma^3} \sum\limits_{A \in I} \sum\limits_{B,C \in N_{A}}\myE\left[\eta_A \eta_B \eta_C \right] < C\left(\frac{n^4p^7}{n^{9/2}p^{15/2}}\right) = \frac{C}{(np)^{1/2}} \rightarrow 0
\end{align*}
\end{proof}

Next, we show the follow lemma on $c^{(1)}$, which intuitively says that if $p \gg n^{-1/2}$, then with high probability, $c^{(1)} = 1$.

\begin{lemma}\label{lem:estimate_c1}
If $p \gg n^{-1/2}$, then $\myprob{c^{(1)} > 1} = o (1)$.
\end{lemma}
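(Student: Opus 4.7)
The plan is to condition on the set of neighbors of the root $u$ and reduce the question to a standard connectivity problem for an Erd\H{o}s--R\'enyi subgraph. The key structural observation is that, conditional on $N(u) = S$ for any specific set $S \subset V \setminus \{u\}$ of size $k$, the $\binom{k}{2}$ potential edges of $G$ with both endpoints in $S$ remain mutually independent $\text{Bernoulli}(p)$ variables, since they are disjoint from (and hence independent of) the edges incident to $u$ that determine $N(u)$. Thus, conditional on $|N(u)| = k$, the induced subgraph $G_{u;1}$ has the same distribution as $G(k, p)$, and the event $\{c^{(1)} > 1\}$ is precisely the event that this conditional $G(k, p)$ is disconnected (with at least two vertices).

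Given this reduction, I would proceed in two steps. First, I would apply the Chernoff bound (Lemma \ref{lem:chernoff}) to $D_u := |N(u)| \sim \text{Binomial}(n-1, p)$: since $np \to \infty$ under the hypothesis $p \gg n^{-1/2}$, one obtains $\myprob{D_u < np/2} = o(1)$. So it suffices to show that $\myprob{G(k, p) \text{ is disconnected}} = o(1)$ uniformly over $k \geq np/2$. Second, I would bound this connectivity probability by a union bound over cuts: if $G(k, p)$ on vertex set $S$ is disconnected, then there exists a non-trivial partition $B \sqcup (S \setminus B)$ with $1 \leq |B| \leq k/2$ and no edge crossing between $B$ and $S \setminus B$, which yields
\begin{align*}
\myprob{G(k, p) \text{ disconnected}} \leq \sum_{j=1}^{\lfloor k/2 \rfloor} \binom{k}{j}(1-p)^{j(k-j)} \leq \sum_{j=1}^{\lfloor k/2 \rfloor} \left(\frac{ek}{j}\right)^j e^{-p j (k-j)}.
\end{align*}

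The main obstacle will be bounding this sum uniformly for $k \geq np/2$. The $j = 1$ term is essentially the expected number of vertices of $N(u)$ that are isolated inside $G_{u;1}$ and, after substituting $k \geq np/2$, is of order $np \cdot e^{-\Theta(np^2)}$; showing that this decays to zero in the specified regime, and verifying that the $j \geq 2$ terms decay geometrically faster so the $j=1$ contribution dominates, is the delicate bookkeeping step. Once this uniform $o(1)$ bound is in hand, combining it with the Chernoff estimate on $D_u$ by averaging over $k$ immediately yields $\myprob{c^{(1)} > 1} = o(1)$, as required.
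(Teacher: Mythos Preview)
Your proposal is correct and follows essentially the same route as the paper: condition on the neighborhood size, use Chernoff (Lemma~\ref{lem:chernoff}) to localize $|N(u)|$, and then control $\myprob{G(k,p)\text{ disconnected}}$ by a first-moment union bound, treating the $j=1$ (isolated vertex) term separately and showing the $j\geq 2$ terms are geometrically smaller.

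Two minor differences are worth noting. First, the paper applies Chernoff on both sides to confine $k$ to the window $[(n-1)p/2,\,2(n-1)p]$, which makes the subsequent estimates (e.g.\ $k e^{-pk/2}<2np\,e^{-np^2/8}$) immediate; you only impose the lower bound $k\geq np/2$, so to get uniformity you implicitly rely on the fact that $k\mapsto k(1-p)^{k-1}$ is decreasing for $k\gg 1/p$---true here since $np^2\to\infty$, but worth stating. Second, the paper's union bound runs over \emph{connected components} of size $i$ (invoking Cayley's formula $i^{i-2}$ for spanning trees), whereas you run over \emph{cuts} $B\sqcup(S\setminus B)$ with no crossing edge. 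Your version is slightly cruder (it counts disconnected as well as connected $B$) but also slightly simpler, and either bound suffices in this regime.
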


\begin{proof}
Denote the number of vertices in $G_{u;1}$ by $\alpha$, which is a random variable with binomial distribution $Bin(n-1, p)$. Thus, by using Chernoff bound (Lemma \ref{lem:chernoff}), we have
\begin{align*}
\myprob{\alpha > 2(n-1)p} < \exp\left( - (n-1)p / 3 \right),\\
\myprob{\alpha < (1/2) (n-1)p} < \exp\left( - (n-1)p / 8 \right).
\end{align*}

By applying the law of total probability, we know that
\begin{align}\label{eqn:c_1_main}
&\myprob{c^{1} > 1} \nonumber\\
=& \myprob{G_{u;1} ~\text{is not connected}} \nonumber\\
\leq& \myprob{\alpha > 2(n-1)p} + \myprob{\alpha < (1/2)(n-1)p} \nonumber\\
&+ \myprob{G_{u;1} ~\text{is not connected} \middle| (1/2)(n-1)p \leq \alpha \leq 2(n-1)p} \nonumber\\
 <& \exp\left( - (n-1)p /3 \right) + \exp\left( - (n-1)p / 8 \right) \\
 &+ \myprob{G_{u;1} ~\text{is not connected} \middle| (1/2)(n-1)p\leq \alpha \leq 2(n-1)p} \nonumber
\end{align}

For a fixed $k \in \left[(1/2)(n-1)p, 2(n-1)p\right]$, let $Y_i^{(k)}$ be the number of components with $i$ vertices in $G_{u;1}$ conditioned on $\alpha = k$. Thus, we have
\begin{align}\label{eqn:c_1_conditioned}
\myprob{G_{u;1} ~\text{is not connected} \middle| \alpha = k} = \myprob{\bigcup\limits_{i=1}^{\floor*{k/2}}\{Y_i^{(k)} > 0 \}} \leq \sum_{i=1}^{\floor*{k/2}}\myprob{Y_i^{(k)} > 0}
\end{align}
Note that since $p \gg n^{-1/2}$, $k$ goes to infinity as $n$ goes to infinity. By applying Markov's inequality and noticing the fact that for any $x \in [0,1]$, $1-x \leq e^{-x}$, we know that
\begin{align}\label{eqn:isolated_points}
\myprob{Y_1^{(k)} > 0} \leq \myE\left[ Y_1^{(k)}\right] = k(1-p)^{k-1} < k(1-p)^{k/2} \leq k e^{-pk/2} < 2npe^{-np^2/8}
\end{align}
Similarly, since the number of spanning trees on a fixed set of $i$ vertices is $i^{i-2}$ (the so-called Cayley's formula), we have
\begin{align}\label{eqn:connectedcomponents}
\sum\limits_{i=2}^{\floor*{k/2}}\myprob{Y_i^{(k)} > 0} \leq \sum\limits_{i=2}^{\floor*{k/2}}\myE\left[ Y_i^{(k)}\right] &\leq \sum_{i=2}^{\floor*{k/2}}\binom{k}{i}i^{i-2}(1-p)^{i(k-i)}p^{i-1}\nonumber\\
& \leq \sum_{i=2}^{\floor*{k/2}} \left(\frac{ek}{i} \right)^{i} i ^{i-2}(1-p)^{i(k/2)}p^{i-1}\nonumber\\
& = \sum_{i=2}^{\floor*{k/2}}\frac{1}{pi^2}\left(ekp(1-p)^{k/2} \right)^{i}\nonumber\\
& < \frac{1}{4p} \sum_{i=2}^{\floor*{k/2}}\left(ekp(1-p)^{k/2} \right)^{i} \nonumber\\
& = \frac{1}{4p} \left(ekp(1-p)^{k/2} \right)^2 \frac{1- \left(ekp(1-p)^{k/2} \right)^{\floor*{k/2} -1}}{1- ekp(1-p)^{k/2} }
\end{align}
Note that 
\begin{align*}
ekp(1-p)^{k/2} \leq ekpe^{-pk/2} < 6 (np)e^{-np^2/8} = 6 e^{-np^2/8 + \ln (np)}
\end{align*}
Also note that $p \gg n^{-1/2}$ implies $np^2/8 >  \ln (np)$. Thus, we know that $ekp(1-p)^{k/2} \rightarrow 0$ as $n \rightarrow \infty$. Furthermore, applying this fact to Eqn. \ref{eqn:connectedcomponents}, for large enough $n$, we have
\begin{align}\label{eqn:connectedcomponents_final}
\sum\limits_{i=2}^{\floor*{k/2}}\myprob{Y_i^{(k)} > 0} < \frac{1}{p}\left(ekp(1-p)^{k/2} \right)^2 \leq 36n^2pe^{-np^2/4}
\end{align}
Combining Eqn. \ref{eqn:c_1_conditioned} with Eqn. \ref{eqn:connectedcomponents_final} and Eqn. \ref{eqn:isolated_points}, we know that for any fixed $k \in \left[(1/2)(n-1)p, 2(n-1)p\right]$, we have
\begin{align*}
\myprob{G_{u;1} ~\text{is not connected} \middle| \alpha = k} < 2npe^{-np^2/8} + 36n^2pe^{-np^2/4}
\end{align*}
Thus, we have the following estimate for the last term on the right side of Eqn. \ref{eqn:c_1_main}.
\begin{align*}
&\myprob{G_{u;1} ~\text{is not connected} \middle| (1/2)(n-1)p\leq \alpha \leq 2(n-1)p}\\
<& \frac{3}{2}np\left( 2npe^{-np^2/8} + 36n^2pe^{-np^2/4}\right)\\
=& 3 e^{-np^2/8 + 2\log (np)} + 54e^{-np^2/4 + \log(np^2) + \log(np)} = o(1)
\end{align*} 
Plugging the above equation back to Eqn. \ref{eqn:c_1_main} concludes the proof of Lemma \ref{lem:estimate_c1}.
\end{proof}

\begin{proof}[\bf Proof of Theorem \ref{thm:CLT_C}]
Let $Z = \mathcal{N}(0,1)$ be a standard normal random variable. Set
\begin{align*}
\widetilde{X^{(1)}} := \frac{X^{(1)} - \myE[X^{(1)}]}{\sqrt{\myVar{X^{(1)}}}}, ~~~\widetilde{C^{(1)}} := \frac{C^{(1)} - \myE[C^{(1)}]}{\sqrt{\myVar{C^{(1)}}}}.
\end{align*}
To prove the theorem, it suffices to show that $d_{TV}\left(\widetilde{C^{(1)}},  Z\right) = o(1)$. 
Let $\mathcal{B}$ denote the class of Borel sets in $\mathbb{R}$. Recall that $X^{(1)} = C^{(1)} - c^{(1)}$. And it is easy to check that under the assumption $c^{(1)} = 1$, we have $\widetilde{X^{(1)}} = \widetilde{C^{(1)}}$. Note that for any Borel set $A \in \mathcal{B}$, we have
\begin{align*}
\myprob{\widetilde{X^{(1)}} \in A} - \myprob{\widetilde{C^{(1)}} \in A}  &\leq \myprob{\widetilde{X^{(1)}} \in A \middle| c^{(1)} = 1} + \myprob{c^{(1)} > 1}  - \myprob{\widetilde{C^{(1)}} \in A} \\
&= \myprob{\widetilde{C^{(1)}} \in A} + \myprob{c^{(1)} > 1}  - \myprob{\widetilde{C^{(1)}} \in A}\\
&= \myprob{c^{(1)} > 1} 
\end{align*}
On the other hand, we can also derive a lower bound. 
\begin{align*}
\myprob{\widetilde{X^{(1)}} \in A} - \myprob{\widetilde{C^{(1)}} \in A}  &\geq \myprob{\widetilde{X^{(1)}} \in A \middle| c^{(1)} = 1} \myprob{c^{(1)} = 1} - \myprob{\widetilde{C^{(1)}} \in A}\\
& = - \myprob{\widetilde{C^{(1)}} \in A} \left(1 -  \myprob{c^{(1)} = 1}\right)\\
& \geq - \myprob{c^{(1)} > 1} 
\end{align*}
Thus, we have
\begin{align*}
d_{TV}\left(\widetilde{C^{(1)}}, \widetilde{X^{(1)}} \right) = \sup\limits_{A \in \mathcal{B}} \left|\myprob{\widetilde{C^{(1)}} \in A} - \myprob{\widetilde{X^{(1)}} \in A} \right| \leq \myprob{c^{(1)} > 1} 
\end{align*}
By triangle inequality, we know
\begin{align*}
d_{TV}\left(\widetilde{C^{(1)}}, Z\right) \leq d_{TV}\left(\widetilde{C^{(1)}}, \widetilde{X^{(1)}} \right) + d_{TV}\left(\widetilde{X^{(1)}}, Z\right) \leq \myprob{c^{(1)} > 1} + d_{TV}\left(\widetilde{X^{(1)}}, Z\right)
\end{align*}
Finally, applying Lemma \ref{lem:CLT_C-c} and Lemma \ref{lem:estimate_c1} concludes the proof.
\end{proof}

\section{Comments and open questions}
In this paper, we discussed the behavior of $C_{n,p}^{(1)}$ of \myER{} random graphs $G(n,p)$, which is a first step to understand the local topology of random graphs. However, there are still some unsolved questions related to the $1$st Betti number of layer-$r$ subgraphs. We assume the sampled graph is $G = (V,E)$.

\begin{enumerate}
\item[1.] Let $\beta_{1,d}:= \left|\left\{ v \in V: \beta_1\left(G_{v;1} \right) = d \right\} \right|$. Inspired by the standard results on the degree distribution of random graphs (Section 3.1 in \cite{frieze2016introduction}), a natural question arises: what is the distribution of $\beta_{1,d}$? Based on our empirical result on the $p$-values of the D'Agostino-Pearson test\footnote{The D'Agostino-Pearson test is a normality test with the null hypothesis being ``the samples are normally distributed''. A small $p$-value (typically when $\leq .05$) indicates strong evidence against the null hypothesis, which means that with high probability, the samples are not normally sampled. A large $p$-value (typically when $> .05$) indicates weak evidence against the null hypothesis, which means we fail to reject the null hypothesis.} \cite{d1973tests} (see Figure \ref{fig:distribution_betti1_1_ring}), we conjecture that when $p$ is large enough, then $\beta_{1,d}$ should obey a normal distribution.
 
\begin{figure}[h]
  \centering
  \begin{tabular}{c}
  \includegraphics[height=5cm]{./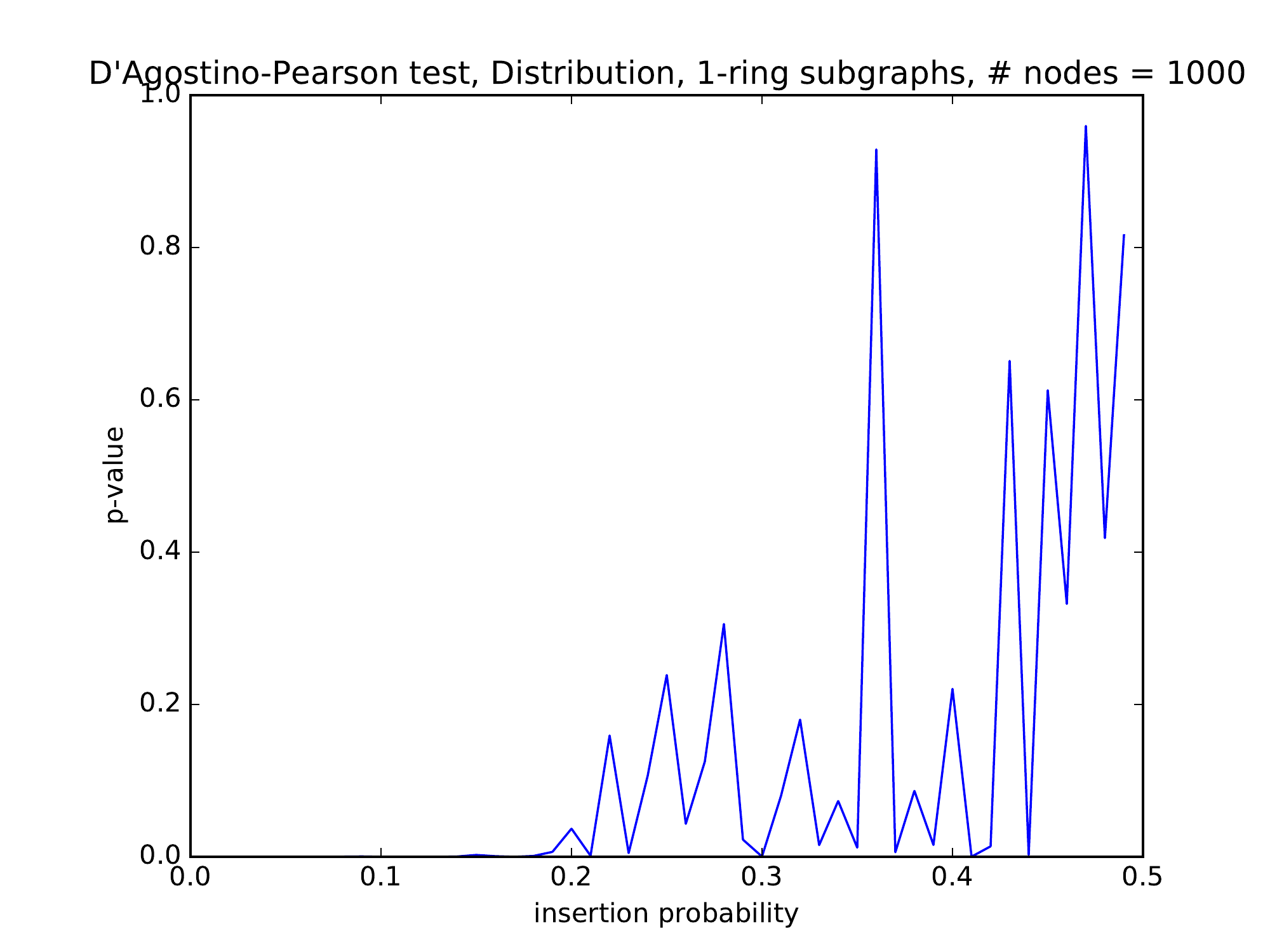}  \\
(a) $G(1000, p)$ with $p \in [0, .5]$
  \end{tabular}
\vspace*{-0.15in}
  \caption{Given a graph $G$ sampled from \myER{} random graph $G(1000,p)$, we perform the so-called D'Agostino-Pearson test on the 1000 samples $\left\{\beta_{1}(G_{v;1}): v \in G \right\}$ (one value for each vertex). We simply use the function \textsc{scipy.stats.normaltest} in Python to compute the $p$-values of these tests (y-axis). Note that a large $p$-value doesn't mean that the samples are actually sampled from a normal distribution. However, the result still gives us a hint on how large $p$ should be such that the distribution of $\beta_{1,d}(G)$ looks like a normal distribution.}
  \label{fig:distribution_betti1_1_ring}
\end{figure}

\item[2.] We only consider the layer-$1$ subgraphs in $G(n,p)$. So how about layer-$k$ subgraphs? Interestingly, for $k=2,3$, our empirical result indicates that there should exist two disjoint ranges of $p$ such that when $p$ falls in either range, a central limit theorem of $C_{n,p}^{(k)}$ holds (see Figure \ref{fig:betti1_2_3_ring}). 

\end{enumerate}

\begin{figure}[t]
  \centering
  \begin{tabular}{ccc}
  \includegraphics[height=5cm]{./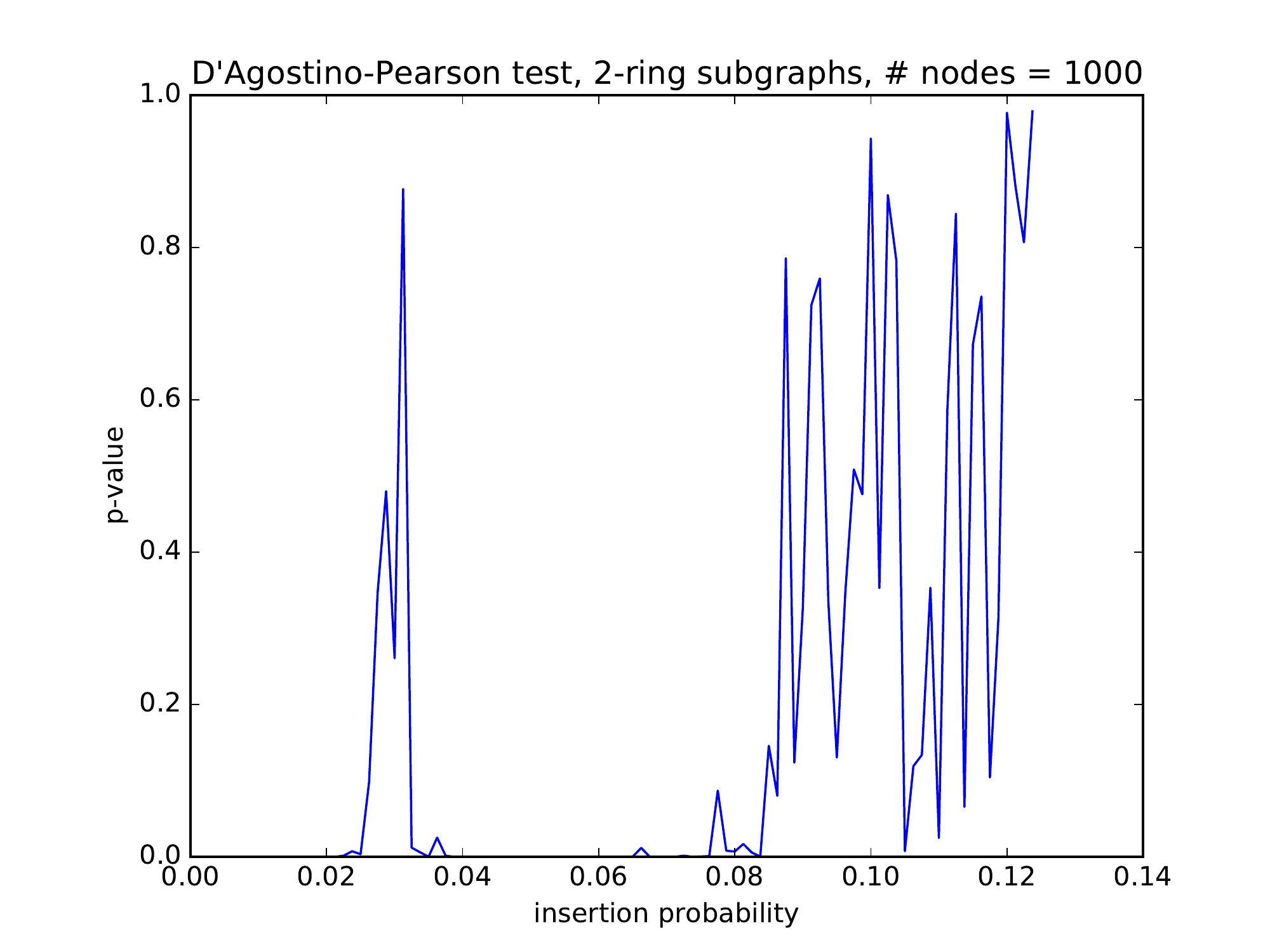} &  & \includegraphics[height=5cm]{./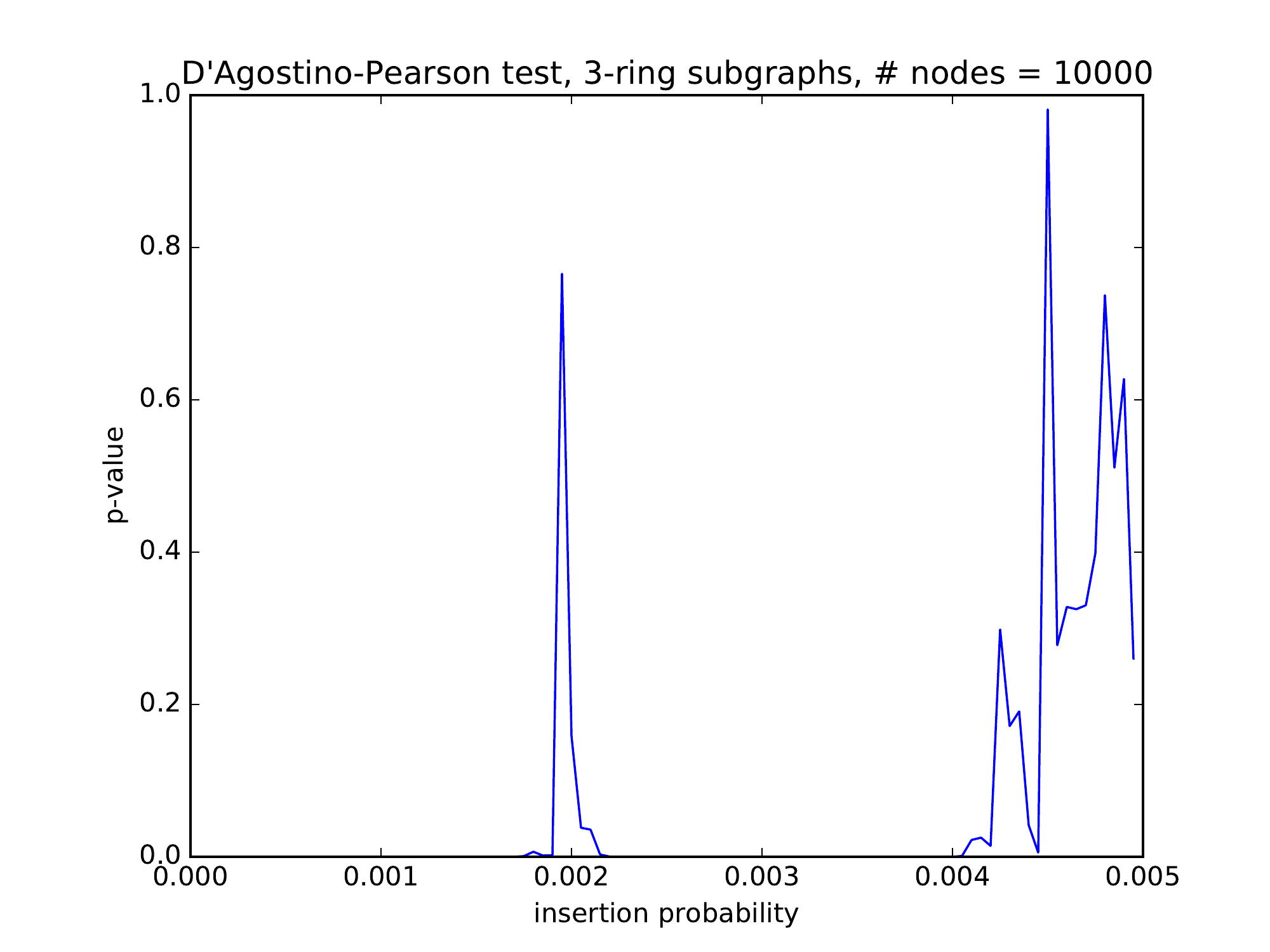} \\
(a) $G(1000, p)$ with $p \in [0, .125]$ & & (b) $G(10000, p)$ with $p \in [0, .005]$
  \end{tabular}
\vspace*{-0.15in}
  \caption{Again, we perform the D'Agostino-Pearson tests on 1000 samples of $C_{1000,p}^{(2)}$ and $C_{10000,p}^{(3)}$, respectively. These results don't directly support our conjecture, but they still give us a hint on the range of $p$ in which a central limit theorem of the corresponding $1$st Betti number may hold.}
  \label{fig:betti1_2_3_ring}
\end{figure}

\bibliographystyle{amsalpha}
\bibliography{cyclomatic}

\appendix
\section{Information encoded in the extended persistence diagrams}\label{appendix:extended_persistence_diagram}

Let $G = (V, E)$ be a connected unweighted finite graph with at least two vertices. Fix an arbitrary node $v \in V$ and let $f_v: V \rightarrow \mathbb{Z}_{\geq 0}$ be the graph distance function (i.e. for $\forall u \in V$, $f_v(u)$ is the shortest-path distance from $u$ to $v$; $f_v(v)=0$). Set $h_v := \max_{u \in V} f_v(u)$ (the height of the shortest-path tree). Recall that the layer-$k$ subgraph $G_{v;k}$ is just the induced subgraph on vertex set $\{u \in V \mid f_v(u)=k\}$.

Note that $G$ can also be viewed a finite $1$-dimensional CW complex $X$ \cite{hatcher2002algebraic} and thus a compact topological space. We now define a piecewise linear function $F_v: X \rightarrow \mathbb{R}$ such that $F_v$ and $f_v$ agree at $X^{(0)}$ (the $0$-skeleton of $X$). That is, for every point $x \in X$, if $x \in X^{(0)}$, then set $F_v(x) = f_v(x)$; otherwise, there exists a $1-$cell $e_{\alpha}^1 \ni x$, then set $F_v(x) = g_{\alpha}\left(\phi_{\alpha}^{-1}(x)\right)$, where $\phi_{\alpha}$ is the corresponding characteristic map of $e^1_{\alpha}$ and
\begin{align*}
g_{\alpha}(t) := \left[f_v\left(\rho^1_{\alpha}(0)\right) - f_v\left(\rho^1_{\alpha}(1)\right)\right]t + f_v\left(\rho^1_{\alpha}(0)\right).
\end{align*}

Follow the idea of \cite{bendich2013homology}, we consider the \emph{super-level set filtration} of $X$ constructed by the \emph{super-level sets} $X^t(F_v) = F_v^{-1}[t, +\infty)$, for all real values $t$. We use the direct sum $\mathsf{H}(X^t(F_v)) := \mathsf{H}_{0}(X^t(F_v)) \oplus \mathsf{H}_{1}(X^t(F_v))$ to suppress the homological dimension.  We also borrow the definitions of \emph{homological regular value} and \emph{homological critical value} from \cite{bendich2013homology}: A real value $t$ is called a homological regular value of $f$ if there exists $\epsilon > 0$ such that the map between homology groups induced by the inclusion $X^{t+\delta}(F_v) \xhookrightarrow{} X^{t-\delta}(F_v)$ is an isomorphism for every $\delta < \epsilon$; Otherwise, $t$ is called a homological critical value. We now artificially add $(h_v + 2)$ homological regular values $s_0, s_1, \cdots, s_{h_v + 1}$ to the sequence with $s_i = h_v - i + 0.5$. Set $\mathcal{X}^i := \mathcal{X}^{s_i}(F_v) = F_v^{-1}[s_i, +\infty)$ and $\mathcal{X}_i := F_v^{-1}\left(-\infty, s_{(h_v + 1) -i}\right]$. We construct the following \emph{extended filtration} \cite{bendich2013homology}:
\begin{align}\label{diag:ex_filtration}
0 = \mathsf{H}\left(\mathcal{X}^{0}\right) \rightarrow \cdots \rightarrow \mathsf{H}\left(\mathcal{X}^{h_v + 1}\right) = \mathsf{H}\left(\mathcal{X}, \mathcal{X}_{0}\right) \rightarrow \cdots \rightarrow     \mathsf{H}\left(\mathcal{X}, \mathcal{X}_{h_v + 1}\right) = 0
\end{align}

Suppose all the homological critical values are $t_1 > t_2 > \cdots > t_m = 0$. Note that for any two consecutive homological regular values $s_i$ and $s_{i+1}$, there is at most one homological critical value $r_j$ such that $s_i > t_j > s_{i+1}$. Also for any homological critical value $t_j$, it must be between two consecutive homological regular values. Thus, we can define an injection $d: [m] \rightarrow [2h_v+1]$ such that $s_{d(i) -1} > t_i > s_{d(i)}$.

The corresponding ($0$-dim and $1$-dim) persistence diagrams (represented in $t_i$ after applying $d^{-1}$) \cite{edelsbrunner2010computational} of the filtration \ref{diag:ex_filtration} are called the ($0$th and $1$st) \emph{extended persistence diagrams} of the super-level set filtration induced by $F_v$. We denote the $0$th and $1$st extended persistence diagrams as $D_0\left(F_v; G\right)$ and $D_1\left(F_v; G\right)$, respectively. In what follows, we will drop the notation $F_v$ for simplicity.

By directly applying a variant of Theorem 2 in \cite{bendich2013homology} (they consider the sub-level set filtration, but here we consider the super-level set filtration), we can extract the $0$th and $1$st Betti numbers of $G_{v;k}$ from the $0$th and the $1$st extended persistence diagrams of the super-level set filtration induced by $F_v$. We use $|\cdot |$ to denote the number of elements in a multiset. 

\begin{claim}\label{claim:betti1}
For any $k \geq 1$, we have 
\begin{align*}
\beta_0\left(G_{v;k}\right) &= \left|\left\{(x, y) \in D_0: x \geq k, y \leq k-1 \right\}\right| + \left|\left\{(x, y) \in D_1: x\leq k-1, y\geq k+1 \right\}\right| \\
\beta_1\left(G_{v;k}\right) &= \left|\left\{(k,k) \in D_1 \right\}\right|
\end{align*}
\end{claim}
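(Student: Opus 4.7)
The plan is to invoke a super-level set analogue of Theorem~2 in \cite{bendich2013homology} after identifying every layer subgraph with a level set of $F_v$. As a first step I would verify that $G_{v;k} = F_v^{-1}(k)$ as topological subspaces of $X$: any edge of $G$ connects vertices whose $f_v$-values differ by at most $1$, so an edge between distinct layers meets $F_v^{-1}(k)$ only at its layer-$k$ endpoint, which is already a vertex of $G_{v;k}$, while every edge internal to layer $k$ satisfies $F_v \equiv k$. Hence the Betti numbers of $G_{v;k}$ coincide with those of the level set $F_v^{-1}(k)$, and it suffices to read the latter off the extended persistence diagrams.

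Next I would translate the level-set Betti number formula of \cite{bendich2013homology} from the sub-level to the super-level setting. The super-level set filtration of $F_v$ is canonically isomorphic to the sub-level set filtration of $-F_v$, which reflects the quadrants detecting level-set Betti numbers through the anti-diagonal. Applying this reflection to the sub-level formula at the integer value $k$ produces the two rectangles in the claim. The $D_0$ rectangle $\{(x,y): x \geq k,\ y \leq k-1\}$ records 0-features born at or above $k$ and merged strictly below $k$ in the super-level phase; each such feature contributes exactly one connected component of $G_{v;k}$. The $D_1$ rectangle $\{(x,y): x \leq k-1,\ y \geq k+1\}$ records essential 1-features whose birth in the first phase lies at or below $k-1$ and whose death in the relative phase lies at or above $k+1$; a Mayer--Vietoris argument on the cover $X = F_v^{-1}(-\infty,\, k+1/2] \cup F_v^{-1}[k-1/2,\, \infty)$, whose intersection deformation-retracts onto $G_{v;k}$, identifies each such feature with an additional connected component of $G_{v;k}$ not captured by the first summand. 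Summing gives the $\beta_0$ formula.

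For the $\beta_1$ identity, I would use that $X$ is a $1$-complex, so every $1$-cycle of $G_{v;k}$ is also a $1$-cycle of $X$ supported entirely in $F_v^{-1}(k)$. In the extended filtration such a cycle can only be born at $t = k$ in the super-level phase (the first moment all its edges, which lie in layer $k$, appear in the super-level set) and can only die at $t = k$ in the relative phase (as soon as the sub-level set reaches $k$ and provides a spanning tree of the cycle). Hence each independent $1$-cycle of $G_{v;k}$ contributes exactly one point $(k,k) \in D_1$, and conversely no other source of diagonal points at $(k,k)$ can arise in this filtration.

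The main obstacle I foresee is the careful bookkeeping when translating quadrant inequalities between the sub- and super-level formulations, especially since $F_v$ is integer-valued on vertices and all homological critical values are integers. I expect this to be handled cleanly by the half-integer regular values $s_i = h_v - i + 0.5$ already inserted into the filtration, which separate consecutive integer critical values and make the closed-interval inequalities of the claim unambiguous after applying the injection $d: [m] \to [2h_v+1]$.
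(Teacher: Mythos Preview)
Your proposal is correct and follows the same approach as the paper: the paper does not give a standalone proof of this claim but simply states that it follows ``by directly applying a variant of Theorem~2 in \cite{bendich2013homology}'' adapted from sub-level to super-level filtrations. Your write-up carries out exactly that adaptation, with the added (correct) verification that $G_{v;k}=F_v^{-1}(k)$ and a more explicit justification of the $(k,k)$ diagonal points, so it is a fleshed-out version of what the paper leaves implicit.
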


Recall that any connected graph $G$ has a stratified structure after introducing the graph distance function $f_v$ at a given root $v$. That is, $G$ can be viewed as a collection of layer-$k$ subgraphs $\{G_{v;k}\}_{0\leq k \leq h_v}$ together with the links between two consecutive layers. Those links can be defined formally as follows. 
\begin{definition}
An edge $e \in E$ is called a $(k, k+1)$-crossing if it connects a vertex in $G_{v;k}$ to a vertex in $G_{v;(k+1)}$. 
\end{definition}

Besides the $0$th and the $1$st Betti numbers, we also aim to recovery the following three basic quantities from $D_0$ and $D_1$.
\begin{itemize}
\denselist
\item[(a)] $\sigma_k :=$ Number of vertices in $G_{v;k}$ for each $k$;
\item[(b)] $\epsilon_k : =$ Number of edges in $G_{v;k}$ for each $k$;
\item[(c)] $\omega_k :=$ Number of $(k, k+1)$-crossings for each $k \leq h_v-1$;
\end{itemize}
From these three quantities, one can depict a very brief ``shape'' of the shortest-path tree. An observation is that we can actually recover $\omega_k$ from $D_0$ and $D_1$.
\begin{claim}
For any $k \geq 0$, we have $$\omega_k = \left|\left\{(x,y) \in D_0: x \geq k+1, y\leq k \right\}\right| + \left|\left\{(x,y) \in D_1: x \leq k, y \geq k+1 \right\}\right|.$$
\end{claim}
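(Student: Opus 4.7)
The plan is to reduce the claim to an Euler-characteristic identity and then reinterpret every Betti number that appears through the extended persistence diagrams. Let $G^{\geq j}$ denote the subgraph of $G$ induced on $\{u \in V : f_v(u) \geq j\}$. Since $f_v$ changes by at most $1$ along any edge, the vertex and edge sets of $G^{\geq k}$ split disjointly as $V(G_{v;k}) \sqcup V(G^{\geq k+1})$ and $E(G_{v;k}) \sqcup E(G^{\geq k+1}) \sqcup \{(k,k+1)\text{-crossings}\}$. Counting vertices minus edges and using $\chi = \beta_0 - \beta_1$ immediately yields
\begin{align*}
\omega_k = [\beta_0(G^{\geq k+1}) + \beta_0(G_{v;k}) - \beta_0(G^{\geq k})] + [\beta_1(G^{\geq k}) - \beta_1(G^{\geq k+1}) - \beta_1(G_{v;k})].
\end{align*}

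I would then establish the persistence-diagram formulas
\begin{align*}
\beta_0(G^{\geq j}) = |\{(x,y) \in D_0 : x \geq j,\ y \leq j-1\}|, \qquad \beta_1(G^{\geq j}) = |\{(x,y) \in D_1 : x \geq j\}|
\end{align*}
for each $j \geq 1$, using the same variant of Theorem~2 in \cite{bendich2013homology} that underlies Claim~\ref{claim:betti1}, applied to the super-level set at the homological-regular value $s = j - 0.5$ (where $\mathcal{X}^s \simeq G^{\geq j}$ because the half-edges dangling across the $s = j - 0.5$ cut deformation-retract onto their layer-$j$ endpoints). The $\beta_0$ expression automatically captures the unique essential $0$-class of the connected graph $G$, whose relative-phase death value is $y = 0 \leq j - 1$; the $\beta_1$ expression uses the fact that every pair in $D_1$ is an extended pair born in the ordinary phase at $s = x$, because the $1$-dimensional complex has no $2$-cells and so $1$-cycles never die in the super-level phase.

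Substituting these formulas, together with Claim~\ref{claim:betti1}, into the Euler identity above, the two copies of $|\{D_0 : x \geq k,\ y \leq k-1\}|$ cancel. Since $y \geq x$ on every pair in $D_1$ (as $\min F_v \leq \max F_v$ on any cycle), the $\beta_1$-contribution simplifies to $|\{D_1 : x = k,\ y \geq k+1\}|$, which combines with the $|\{D_1 : x \leq k-1,\ y \geq k+1\}|$ term from $\beta_0(G_{v;k})$ to produce $|\{D_1 : x \leq k,\ y \geq k+1\}|$; the remaining $D_0$-piece is exactly $|\{D_0 : x \geq k+1,\ y \leq k\}|$, matching the claim. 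The boundary case $k = 0$, which falls outside the hypothesis of Claim~\ref{claim:betti1}, is handled by plugging in $G_{v;0} = \{v\}$ directly ($\beta_0 = 1,\ \beta_1 = 0$) and $\beta_0(G^{\geq 0}) = 1$ from the connectivity of $G$; the same cancellations then go through, using additionally that $(0,0) \notin D_1$ since $G_{v;0}$ has no cycles. I expect the main obstacle to be the second step: carefully verifying the persistence-diagram interpretations of $\beta_0(G^{\geq j})$ and $\beta_1(G^{\geq j})$, particularly the bookkeeping of the essential $0$-class and the fact that every pair in $D_1$ is extended. Once those identifications are pinned down, the rest of the argument is a short algebraic simplification.
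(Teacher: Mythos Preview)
Your proposal is correct and follows exactly the route the paper indicates (``combining Euler's characteristic formula and Claim~\ref{claim:betti1}''); you have simply written out the details that the paper omits.

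One small point of justification to tighten: when you assert that every pair in $D_1$ is an extended pair, the ``no $2$-cells'' argument only rules out \emph{ordinary} $H_1$-pairs. You should also rule out \emph{relative} $H_1$-pairs. This holds here because the sub-level sets $F_v^{-1}(-\infty,t]$ deformation-retract onto the $k$-neighborhoods $G_v^{\lfloor t\rfloor}$, which are connected (any vertex at distance $\leq k$ is joined to $v$ by a shortest path lying entirely in the sub-level set); hence there are no ordinary $H_0$-pairs for the sub-level filtration, and by the usual extended-persistence duality no relative $H_1$-pairs for the super-level filtration. Once that is said, your formula $\beta_1(G^{\geq j}) = |\{(x,y)\in D_1 : x\geq j\}|$ and the inequality $y\geq x$ on $D_1$ are fully justified, and the rest of your cancellation goes through verbatim.
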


The proof is simply combining Euler's characteristic formula and Claim \ref{claim:betti1}, thus is omitted.

\begin{figure}[t]
  \centering
  \begin{tabular}{ccc}
  \includegraphics[height=4cm]{./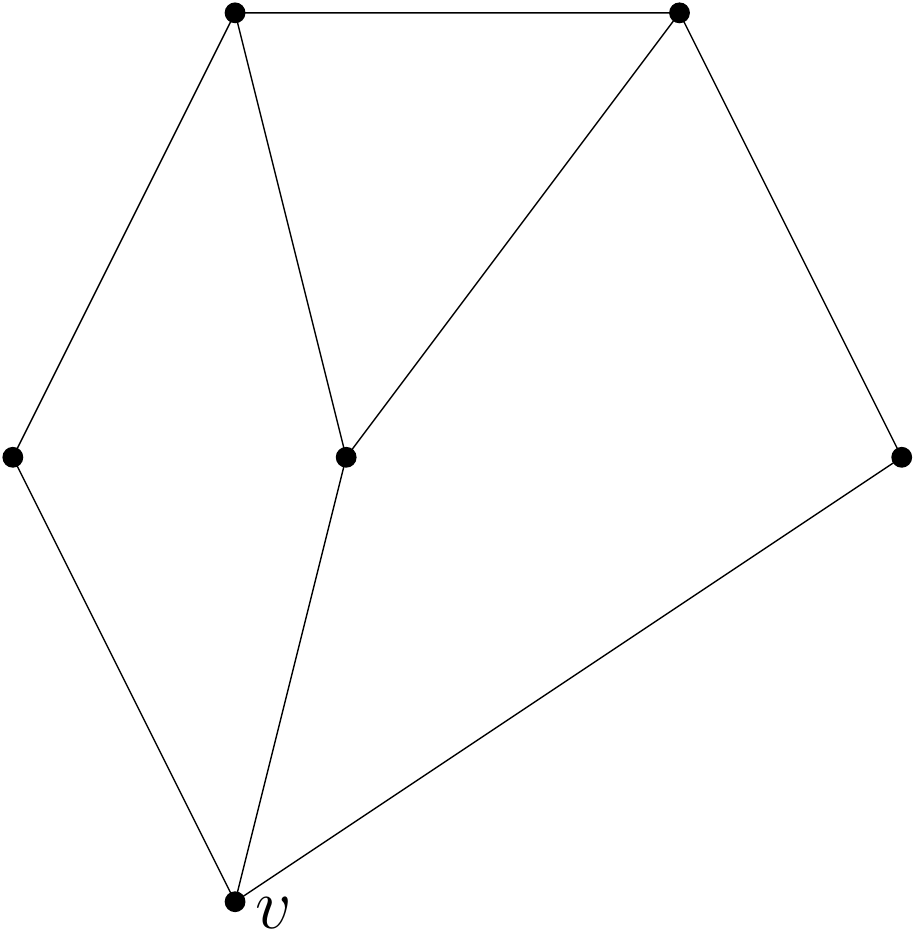} &\hspace*{0.3in} & \includegraphics[height=4cm]{./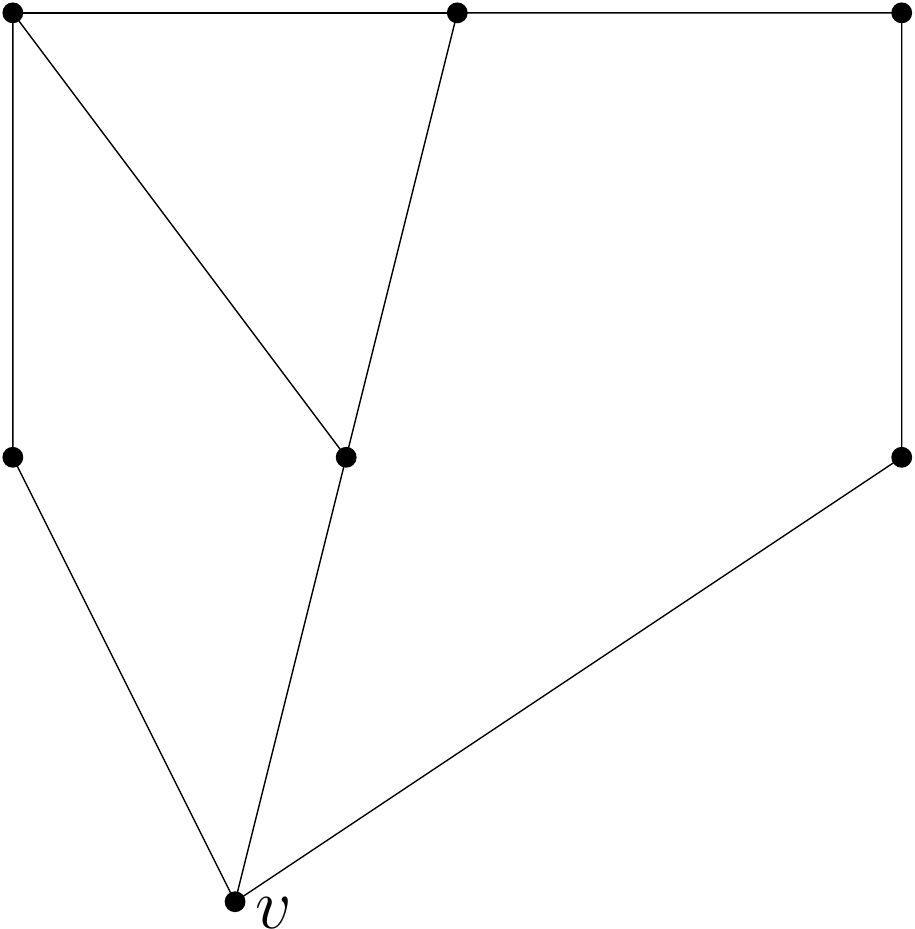} \\
(a) $G_v^a$& & (b) $G_v^b$ 
  \end{tabular}
  \caption{It is easy to check that $D_0\left(G_v^a \right) = D_0\left(G_v^a \right) = \left\{(2,0) \right\}$ and $D_1\left(G_v^a \right) = D_1\left(G_v^a \right) = \left\{(1,2),(0,2),(0,2) \right\}$. However, both the number of vertices and the number of edges in the layer-$2$ subgraphs ($G_{v;2}^a$ and $G_{v;2}^b$) are different from each other.} 
  \label{fig:cannot_recover}
\end{figure}

Unfortunately, there are a lot of counterexamples preventing us from recovering $\sigma_k$ and $\epsilon_k$ from $D_0$ and $D_1$. See Figure \ref{fig:cannot_recover} for one of them. However, if we use the persistence diagrams of the following modified filtration, we can actually recover all three quantities (see Theorem \ref{thm:decode_PD}).

\paragraph{\bf The refined filtration for rooted graphs} Given a rooted graph $G_v$, for each edge $e \in G_{v;k}$, we add its mid point $p_e$ to $G_v$ and artificially set $f_v(p_e) = k + 0.5$. We do this for all $k \in [h_v]$. Roughly speaking, we subdivide all the edges in all layer-$k$ subgraphs $G_{v;k}$. The resulting graph is denoted by $\widehat{G}$. Similarly, $\widehat{G}$ can be viewed as a finite $1$-dimensional CW complex $\widehat{X}$. We can also define a piecewise linear function $F_v': \widehat{X} \rightarrow \mathbb{R}$ such that $F_v'$ and $f_v$ agree at $\widehat{X}^{(0)}$ (the $0$-skeleton of $\widehat{X}$). The super-level set filtration induced by $F_v'$ is called the \emph{refined filtration} for graph $G_v$. See Figure \ref{fig:refined_filtration} for an illustration. Again, we consider the $0$th and $1$st persistence diagrams of the refined filtration.

For example, for graph $G_v^a$ in Figure \ref{fig:cannot_recover} (a), it is not hard to see that $D'_0 \left(G_v^a\right) = \left\{(2.5,0) \right\}$ and $D'_1 \left(G_v^a\right) = \left\{(1,2.5), (0,2), (0,2) \right\}$; for graph $G_v^b$ in Figure \ref{fig:cannot_recover} (b), it is easy to check that $D'_0 \left(G_v^b\right) = \left\{(2.5,0) \right\}$ and $D'_1 \left(G_v^b\right) = \left\{(1,2.5), (0,2), (0,2.5) \right\}$.

\begin{figure}[t]
  \centering
  \begin{tabular}{ccc}
  \includegraphics[height=5cm]{./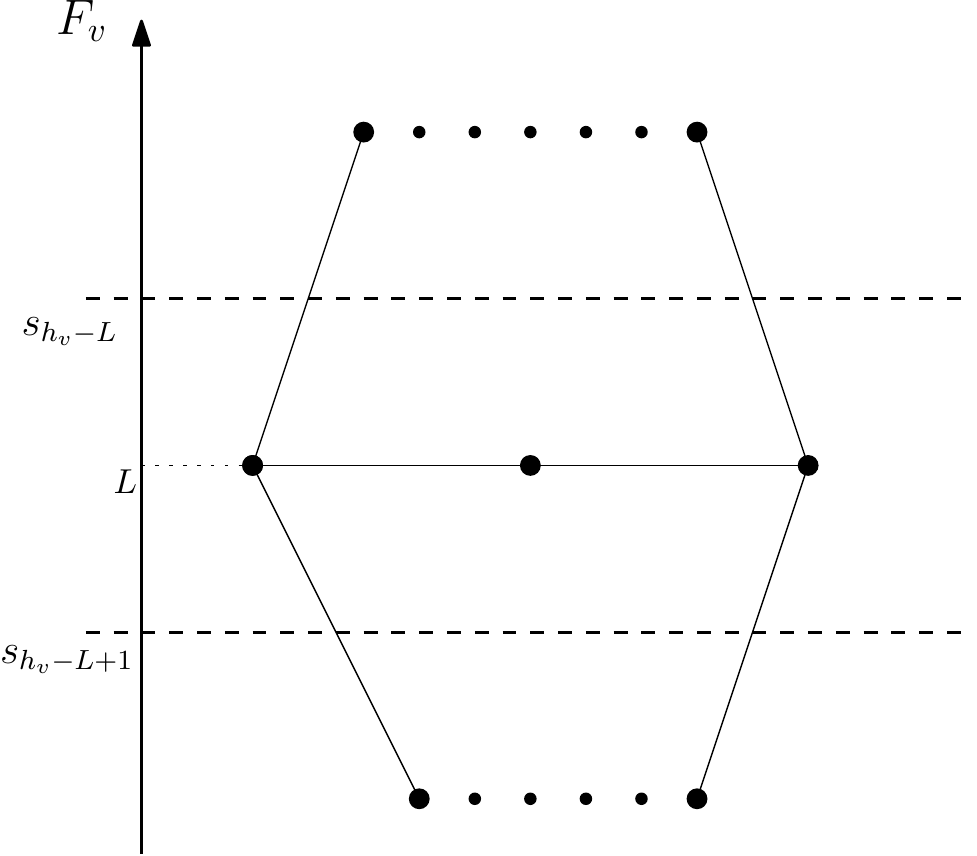} &\hspace*{0.3in} & \includegraphics[height=5cm]{./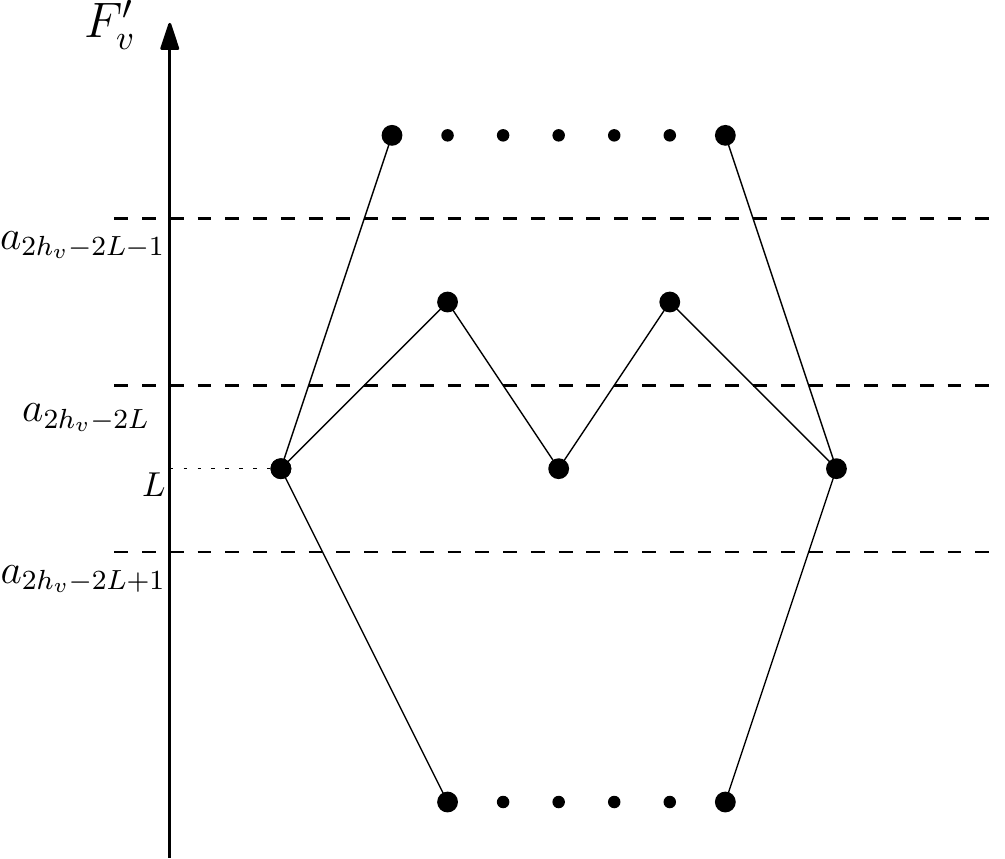} \\
(a) $X$& & (b) The corresponding $\widehat{X}$
  \end{tabular}
  \caption{(a) shows the CW complex $X$ with function $F_v$ discussed at the beginning of this section. (b) shows the refined version of (a). Here, we show a local screenshot of $G_{v;L}$ for both cases. In (a), $s_{h_v-L}$ and $s_{h_v-L+1}$ are two consecutive homological regular values chosen for constructing the filtration, while in (b), $a_{2h_v-2L-1} $, $a_{2h_v-2L}$ and $a_{2h_v-2L+1}$ are  homological regular values for constructing the refined filtration.} 
  \label{fig:refined_filtration}
\end{figure}

Similar to Claim \ref{claim:betti1}, we have the following result regarding to the $0$th and $1$st Betti numbers.
\begin{claim} 
Let $D_0',D_1'$ be the $0$th and $1$st persistence diagrams of the refined filtration induced by $F_v'$, respectively. Then, for any $k \geq 1$, we have 
\begin{align*}
\beta_0\left(G_{v;k}\right) & = \left| \left\{(x, y) \in D'_0 : x \geq k, y < k \right\} \right| + \left| \left\{(x, y) \in D'_1 : x < k, y \geq k+1 \right\} \right|\\
\beta_1\left(G_{v;k}\right) &= \left|\left\{(k,k+0.5) \in D'_1 \right\}\right|
\end{align*}
\end{claim}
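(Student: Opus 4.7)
The plan is to imitate the argument behind Claim \ref{claim:betti1} (itself a variant of Theorem 2 in \cite{bendich2013homology}), but now applied to the refined complex $\widehat{X}$ with the function $F_v'$. The first observation is that $G_{v;k}$ is homotopy equivalent to the ``slice'' $L_k := (F_v')^{-1}[k, k+0.5] \subseteq \widehat{X}$. Concretely, $L_k$ consists of the level-$k$ vertices, the midpoints of all layer-$k$ edges, the two half-edges subdividing each layer-$k$ edge, and the level-$k$ portion of each $(k,k+1)$-crossing edge; the half-crossings are contractible stubs that retract onto their level-$k$ endpoints, and each subdivided layer-$k$ edge $u_1 \!-\! p \!-\! u_2$ collapses back to the original edge $u_1 u_2$. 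Thus $\beta_i(G_{v;k}) = \beta_i(L_k)$ for $i=0,1$, and the task reduces to reading $\beta_0(L_k)$ and $\beta_1(L_k)$ off the diagrams $D_0', D_1'$.

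For $\beta_1$, I will invoke the standard pairing for extended persistence on a $1$-dimensional CW complex: each point $(x,y) \in D_1'$ corresponds to an essential $H_1$-class with a canonical cycle representative $C$ satisfying $x = \min_C F_v'$ and $y = \max_C F_v'$. A cycle lies in $L_k$ exactly when $\min_C F_v' \geq k$ and $\max_C F_v' \leq k+0.5$. Because any nontrivial cycle contained in $L_k$ must traverse at least one layer-$k$ edge and hence visit its midpoint at height $k+0.5$, and must contain a level-$k$ vertex, the only admissible pair is $(k, k+0.5)$. This gives $\beta_1(L_k) = |\{(k,k+0.5) \in D_1'\}|$.

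For $\beta_0$, I plan to apply Mayer--Vietoris to $\widehat{X} = A \cup B$ with $A := (F_v')^{-1}[k, \infty)$ and $B := (F_v')^{-1}(-\infty, k+0.5]$, so that $A \cap B = L_k$. Since $G$ is connected and $B$ is homotopy equivalent to the closed ball of radius $k+0.5$ around $v$, $B$ is connected: $\beta_0(B) = 1$. The Euler-characteristic form of the MV long exact sequence then yields
$$\beta_0(L_k) \;=\; \beta_0(A) + \beta_1(\widehat{X}) - \beta_1(A) - \beta_1(B) + \beta_1(L_k).$$
Applying the standard Bendich--Edelsbrunner--Morozov quadrant counts (the same ones that justify Claim \ref{claim:betti1}) gives $\beta_0(A) = |\{(x,y) \in D_0' : x \geq k,\, y < k\}|$, $\beta_1(A) = |\{(x,y) \in D_1' : x \geq k\}|$, $\beta_1(B) = |\{(x,y) \in D_1' : y \leq k+0.5\}|$, and $\beta_1(\widehat{X}) = |D_1'|$. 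Inclusion--exclusion then collapses $\beta_1(\widehat{X}) - \beta_1(A) - \beta_1(B) + \beta_1(L_k)$ to the number of cycles with $\min < k$ and $\max > k+0.5$, which, since births are integer-valued and deaths half-integer-valued, equals $|\{(x,y) \in D_1' : x < k,\, y \geq k+1\}|$. Summing the two contributions gives the claim.

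I expect the only real hurdle to be the bookkeeping: confirming that the quadrant-count formulas read off correctly when the refined filtration has critical values at both integers and half-integers, and handling the boundary cases $k = 1$ and $k = h_v$ (where $A$ or $B$ may be degenerate). These issues are analogous to ones already resolved for the unrefined Claim \ref{claim:betti1}, so the adaptation should be routine if tedious.
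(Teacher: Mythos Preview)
Your proposal is correct and aligns with the paper's approach, which simply says ``Similar to Claim~\ref{claim:betti1}'' and defers to a variant of Theorem~2 in \cite{bendich2013homology}; you are effectively unpacking that citation by exhibiting the homotopy equivalence $G_{v;k}\simeq L_k=(F_v')^{-1}[k,k+0.5]$ and then running the Mayer--Vietoris/quadrant-count argument that underlies the Bendich--Edelsbrunner--Morozov formula. The paper gives no further proof, so your sketch is strictly more detailed than what appears there.

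One small imprecision: you assert that in $D_1'$ ``births are integer-valued and deaths half-integer-valued.'' Births are indeed at integers (a cycle closes up at a level-$k$ vertex), but deaths need not be half-integers: a cycle that reaches its maximum at a level-$j$ vertex via two $(j{-}1,j)$-crossings, using no layer-$j$ edge, dies at the integer $j$. This does not break your argument, because all you actually need is that critical values lie in $\tfrac{1}{2}\mathbb{Z}$, so $y>k+0.5$ is equivalent to $y\ge k+1$; and for the $\beta_1$ count, any $(x,y)\in D_1'$ with $x\ge k$ and $y\le k+0.5$ is forced to $(k,k+0.5)$ since $x$ is an integer, $x\le y$, and no nontrivial cycle in $\widehat{X}$ can have $\max F_v'=k$. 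Just phrase the justification in terms of the half-integer grid rather than the stronger (false) claim about death parity.
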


Furthermore, by using the refined filtration, we can actually recover the number of vertices $\sigma_k$ and the number of edges $\epsilon_k$ for all layer-$k$ subgraphs. The following result directly follows from the Euler characteristic formula and a variant of Theorem 2 in \cite{bendich2013homology}, thus we omit the proof.

\begin{claim}\label{thm:decode_PD}
Let $D'_0, D'_1$ be the $0$th and $1$st persistence diagrams of the refined filtration induced by $F_v'$. Let $\sigma_k$ and $\epsilon_k$ be the number of vertices and the number of edges in $G_{v;k}$, respectively. Let $\omega_{k}$ be the number of $(k,k+1)$-crossings. We have the following recovery result.
\begin{enumerate}
\item For any $k \geq 1$, we have 
\begin{align*}
\sigma_k &= \left| \left\{(x, y) \in D'_0 : x \geq k, y < k \right\} \right| + \left| \left\{(x, y) \in D'_1 : x < k, y > k \right\} \right|;\\
\epsilon_k &= \left| \left\{(x, k+0.5) \in D'_1 : x \leq k\right\} \right|.
\end{align*}
\item For any $k \geq 0$, we have
\begin{align*}
\omega_k = 
\left| \left\{(x, y) \in D'_0 : x \geq k+ 1, y <k+1 \right\} \right| + \left| \left\{(x, y) \in D'_1 : x < k+1, y \geq k+1 \right\} \right|    
\end{align*}

\end{enumerate}
\end{claim}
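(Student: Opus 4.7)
The plan is to prove the three formulas by the same two-ingredient strategy underlying Claim \ref{claim:betti1}: (i) identify, for each of $\sigma_k, \epsilon_k, \omega_k$, a subspace of $\widehat{X}$ whose Betti numbers encode that quantity, and (ii) translate those Betti numbers into counts of diagram points via the variant of Theorem 2 of \cite{bendich2013homology}. The only new subtlety beyond Claim \ref{claim:betti1} is that the critical values of $F_v'$ now lie in $\tfrac{1}{2}\mathbb{Z}_{\geq 0}$ rather than $\mathbb{Z}_{\geq 0}$, which changes the strict/non-strict rounding in the translated inequalities.

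For $\sigma_k$ and $\omega_k$ I would use level sets of $F_v'$. First, $(F_v')^{-1}(\{k\})$ equals the discrete set $V_k$: the integer $k$ is not attained at any midpoint (they carry half-integer values), nor in the interior of any sub-edge (values in $(j,j+0.5)$), nor in the interior of any crossing (values in $(j,j+1)$, whose interior avoids the integer $k$); the only points at value exactly $k$ are the level-$k$ vertices themselves. Hence $\beta_0\bigl((F_v')^{-1}(\{k\})\bigr) = \sigma_k$ and $\beta_1 = 0$, and applying the Bendich translation at the critical level $k$---whose span conditions "$y < k \leq x$" for $D'_0$ and "$x < k < y$" for $D'_1$ round (in $\tfrac{1}{2}\mathbb{Z}$) to $y < k,\,x \geq k$ and $x < k,\,y > k$ respectively---gives the $\sigma_k$ formula. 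Second, for any regular value $\alpha \in (k+0.5,\,k+1)$, $(F_v')^{-1}(\{\alpha\})$ consists of exactly one interior point per $(k,k+1)$-crossing, so $\beta_0 = \omega_k$ and $\beta_1 = 0$; applying the translation at the regular $\alpha$ and rounding the strict inequalities $x > \alpha,\, y < \alpha$ (resp.\ $x < \alpha,\, y > \alpha$) to the nearest critical values $k+1$ and $k+0.5$ yields $x \geq k+1,\, y < k+1$ (resp.\ $x < k+1,\, y \geq k+1$), which is the $\omega_k$ formula.

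A direct level-set count fails for $\epsilon_k$ because each layer-$k$ edge contributes only trivially to $V_k$ or to a generic level set. Instead I would analyze the sublevel set $\widehat{X}_{\leq t} := (F_v')^{-1}((-\infty,t])$ across $t = k + 0.5$. Passing from $t=(k+0.5)^-$ to $t=k+0.5$, the sublevel gains $\epsilon_k$ midpoints ($0$-cells) and $2\epsilon_k$ sub-edges ($1$-cells), so $\Delta\chi = -\epsilon_k$. For each $e = (u,w) \in G_{v;k}$, the endpoints $u, w$ already lie in a common component of $\widehat{X}_{\leq(k+0.5)^-}$ because their shortest paths to $v$ use only cells at levels $\leq k$, hence are already contained in $\widehat{X}_{\leq k}$; consequently the block "midpoint $+$ two sub-edges" attached to $e$ creates exactly one new $1$-cycle and leaves $\beta_0$ unchanged. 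Summing, $\Delta\beta_1^{\mathrm{sub}}(k+0.5) = \epsilon_k$. By the standard extended-persistence duality, each such sublevel $\beta_1$-increase records the death (in the relative phase) of an essential $H_1$ class of $\widehat{X}$ at the value $k+0.5$, i.e., a point of $D'_1$ with second coordinate $k+0.5$; and since every $1$-cycle in the $1$-dimensional CW complex $\widehat{X}$ must traverse at least one integer-valued vertex, its super-level birth value (the first coordinate of the pair) is automatically $\leq k$.

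The hardest part will be the careful bookkeeping of strict versus non-strict inequalities in the Bendich translation at a critical level of $F_v'$ (for $\sigma_k$), and verifying that the sublevel $\beta_1$-jump at $k+0.5$ contributes exactly $\epsilon_k$ off-diagonal pairs of the form $(x, k+0.5) \in D'_1$---with no cancellations coming from simultaneous $\beta_0$-events and no diagonal pairs with $x = k+0.5$. Both are routine but tight chases of the long exact sequence of $(\widehat{X}, \widehat{X}_{\leq t})$ across $t = k+0.5$; once these are pinned down, the three formulas drop out of the level-set or sublevel-set identifications described above.
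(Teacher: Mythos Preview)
Your proposal is correct and follows the same route the paper indicates (the paper in fact omits the proof, citing only the Euler characteristic formula together with the variant of Theorem~2 in \cite{bendich2013homology}). For $\epsilon_k$ you can shortcut your sublevel-set cycle analysis: once $\sigma_k$, $\beta_0(G_{v;k})$, and $\beta_1(G_{v;k})$ are all expressed as diagram counts (via your level-set argument and the claim immediately preceding this one), Euler's formula $\epsilon_k=\sigma_k-\beta_0(G_{v;k})+\beta_1(G_{v;k})$ collapses directly to $\left|\{(x,k+0.5)\in D'_1 : x\le k\}\right|$.
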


\end{document}